\newtheorem{theo}{Theorem}[section]
\newtheorem{prop}[theo]{Proposition}
\newcommand{\be}{\begin{eqnarray}}
\newcommand{\ee}{\end{eqnarray}}
\newcommand{\bes}{\begin{eqnarray*}}
\newcommand{\ees}{\end{eqnarray*}}
\newcommand{\bi}{\begin{itemize}}
\newcommand{\ei}{\end{itemize}}
\newcommand{\ben}{\begin{enumerate}}
\newcommand{\een}{\end{enumerate}}
\newcommand{\enne}{n\times n}
\newcommand{\R}{\mathbb{R}}
\newcommand{\N}{\mathbb{N}}
\newcommand{\de}{\mathrm {d}}
\def\einschr{\hbox{\kern1pt\vrule height 6pt\vrule  width6pt height 0.4pt depth0pt\kern1pt}}
\DeclareMathOperator{\dive}{div}
\DeclareMathOperator{\supp}{supp}
\DeclareMathOperator{\curl}{curl}
\DeclareMathOperator{\diam}{diam}
\title{\bf  Friedrichs inequality in irregular domains  }
\date{}
\begin{document}
\maketitle

\centerline{\scshape Simone Creo, Maria Rosaria Lancia}
\medskip
{\footnotesize

 \centerline{Dipartimento di Scienze di Base e Applicate per l'Ingegneria, Universit\`{a} degli studi di Roma Sapienza,
}
   \centerline{Via A. Scarpa 16,}
   \centerline{00161 Roma, Italy.}
}

\bigskip

\begin{abstract}
\noindent We prove a generalized version of Friedrichs and Gaffney inequalities for a bounded $(\varepsilon,\delta)$ domain $\Omega\subset\mathbb{R}^n$, $n=2,3$, by adapting the methods of Jones to our framework.
\end{abstract}

\medskip

\noindent\textbf{Keywords:} Friedrichs inequality, Gaffney inequality, $(\varepsilon,\delta)$ domains, Whitney decomposition, coercivity estimates.\\

\noindent{\textbf{2010 Mathematics Subject Classification:} Primary: 35A23. Secondary: 35Q61, 78A25.}

\bigskip

\section{Introduction}
\setcounter{equation}{0}

\noindent The aim of this paper is to prove Friedrichs inequality for $(\varepsilon,\delta)$ domains. This inequality has been introduced in different frameworks by Friedrichs \cite{friedrichs} and Gaffney \cite{gaffney}, and in the literature it is known with different names according to the setting where it is used. In the study of Maxwell problems or Navier-Stokes equations, this inequality is a key tool to prove the coercivity of the associated energy forms. From the point of view of applications, it is interesting to study vector BVPs in irregular domains (see e.g. \cite{CHLTV,LVstokes}) and their numerical approximation, hence it is crucial to extend these inequalities to the case of suitable irregular sets. From this perspective, we confine ourselves to two or three-dimensional domains.\\
Gaffney inequality can be deduced from the Friedrichs inequality. To our knowledge, such inequalities hold for convex and Lipschitz domains; among the others, we refer to \cite{bauerpauly,Schw16,NPW15,amrouche}, see also \cite{dacorogna} and the references listed in. In this paper, we first prove Friedrichs inequality for $(\varepsilon,\delta)$ domains, and then prove Gaffney inequality by adapting the methods of \cite{duran} (developed for Korn inequality) to this framework.\\
The class of $(\varepsilon,\delta)$ domains has been introduced by Jones \cite{Jones}, and it is quite general, since the boundary of an $(\varepsilon,\delta)$ domain can be highly non-rectifiable, e.g. fractal or a $d$-set (see Definitions \ref{defepsdelta} and \ref{dset}).\\
In the literature, for $\Omega\subset\R^n$ $(n=2,3)$ sufficiently smooth, the Friedrichs inequality reads as follows: if $v\in W^{1,p}(\Omega)^n$, there exists a positive constant $C$, depending on $\Omega$, $n$ and $p$, such that
\begin{equation}\label{Fr1}
\|v\|_{W^{1,p}(\Omega)^n}\leq C(\|v\|_{L^p(\Omega)^n}+\|\dive v\|_{L^p(\Omega)}+\|\curl v\|_{L^p(\Omega)^n}).
\end{equation}
Gaffney inequality is a direct consequence of Friedrichs inequality \eqref{Fr1} when considering boundary conditions. We introduce the following spaces:
\[W^p(\dive,\Omega):=\left\lbrace u\in L^p(\Omega)^n\,:\,\dive u\in L^p(\Omega)  \right\rbrace,\]
\[W^p_0(\dive,\Omega):=\{u\in W^p(\dive,\Omega)\,:\,\nu\cdot u=0\,\,\text{on}\;\partial\Omega \},\]
\[W^p(\curl,\Omega):=\left\lbrace u\in L^p(\Omega)^n\,:\,\curl u\in L^p(\Omega)^n  \right\rbrace,\]
\[W^p_0(\curl,\Omega):=\{u\in W^p(\curl,\Omega)\,:\,\nu\times u=0\,\,\text{on}\;\partial\Omega \},\]
where $\cdot$ and $\times$ denote respectively the usual scalar and cross products between vectors in $\R^n$. The boundary conditions have to be interpreted in a suitable weak sense (see e.g. \cite{temam}).\\
When $v\in W^p(\dive,\Omega)\cap W^p_0(\curl,\Omega)$ or $v\in W^p(\curl,\Omega)\cap W^p_0(\dive,\Omega)$, Gaffney inequality takes the following form:
\begin{equation}\label{Fr2}
\|\nabla v\|_{L^p(\Omega)^{\enne}}\leq C(\|\dive v\|_{L^p(\Omega)}+\|\curl v\|_{L^p(\Omega)^n}).
\end{equation}
Our aim is to extend Gaffney inequality to those $(\varepsilon,\delta)$ domains for which it is possible to give an interpretation of the boundary conditions. In particular, we consider $(\varepsilon,\delta)$ domains $\Omega$ in $\R^n$ whose boundaries are $d$-sets or arbitrary closed sets in the sense of Jonsson \cite{jonsson91}. In these cases, it can be proved that the spaces $W^p_0(\dive,\Omega)$ and $W^p_0(\curl,\Omega)$ are well defined because generalized Green and Stokes formulas hold. This implies that the normal and tangential traces are well defined as elements of the duals of suitable trace Besov spaces on the boundary (see \cite{LaVe2} and \cite{CHLTV}).

We extend \eqref{Fr1} and \eqref{Fr2} to $(\varepsilon,\delta)$ domains $\Omega\subset\R^n$ for either $v\in W^p(\curl,\Omega)\cap W^p_0(\dive,\Omega)$ (see Section \ref{divergenza}) or $v\in W^p(\dive,\Omega)\cap W^p_0(\curl,\Omega)$ (see Section \ref{rotore}), according to the boundary conditions under consideration. The main results of this paper are Theorems \ref{fridis}, \ref{gaffin}, \ref{fridisrot} and \ref{gaffinrot}.\\
The proof of our results deeply relies on the assumptions on $\Omega$. Since $\Omega$ is an $(\varepsilon,\delta)$ domain, for each $v\in W^p(\curl,\Omega)\cap W^p_0(\dive,\Omega)$ (for each $v\in W^p(\dive,\Omega)\cap W^p_0(\curl,\Omega)$ respectively) we construct a suitable extension $Ev$ by adapting Jones' approach \cite{Jones}. More precisely, we consider a Whitney decomposition of $\Omega$ and we construct an extension operator in terms of suitable linear polynomials which satisfies the crucial estimates \eqref{corol1} and \eqref{corol2} (\eqref{corol1rot} and \eqref{corol2rot} respectively). The thesis is then achieved by density arguments.\\
Throughout the paper, $C$ will denote different positive constant. Sometimes, we indicate the dependence of these constants on some particular parameters in parentheses.

\section{$(\varepsilon,\delta)$ domains and trace results}
\setcounter{equation}{0}

We recall the definition of $(\varepsilon,\delta)$ (or Jones) domain.
\begin{definition}\label{defepsdelta} Let $\mathcal{F}\subset\R^n$ be open and connected and $\mathcal{F}^c:=\R^n\setminus\mathcal{F}$. For $x\in\mathcal{F}$, let $\displaystyle d(x):=\inf_{y\in\mathcal{F}^c}|x-y|$. We say say that $\mathcal{F}$ is an $(\varepsilon,\delta)$ domain if, whenever $x,y\in\mathcal{F}$ with $|x-y|<\delta$, there exists a rectifiable arc $\gamma\in\mathcal{F}$ joining $x$ to $y$ such that
\begin{center}
$\displaystyle\ell(\gamma)\leq\frac{1}{\varepsilon}|x-y|\quad$ and\quad $\displaystyle d(z)\geq\frac{\varepsilon|x-z||y-z|}{|x-y|}$ for every $z\in\gamma$.
\end{center}
\end{definition}

As pointed out in the Introduction, we consider two particular classes of $(\varepsilon,\delta)$ domains $\Omega\subset\R^n$:
\begin{itemize}
	\item[$i)$] $(\varepsilon,\delta)$ domains having as boundary a $d$-set;
	\item[$ii)$] arbitrary closed $(\varepsilon,\delta)$ domains in the sense of \cite{jonsson91}.
\end{itemize}
For the sake of completeness, we recall the definition of $d$-set given in \cite{JoWa}.
\begin{definition}\label{dset}
A closed nonempty set $\mathcal{M}\subset\R^n$ is a $d$-set (for $0<d\leq n$) if there exist a Borel measure $\mu$ with $\supp\mu=\mathcal{M}$ and two positive constants $c_1$ and $c_2$ such that
\begin{equation}\label{defindset}
c_1r^{d}\leq \mu(B(P,r)\cap\mathcal{M})\leq c_2 r^{d}\quad\forall\,P \in\mathcal{M}.
\end{equation}
The measure $\mu$ is called $d$-measure.
\end{definition}

\medskip

In both the cases $i)$ and $ii)$, we can prove trace theorems, i.e. Green and Stokes formulas. For the sake of simplicity, we restrict ourselves to the case in which $\partial\Omega$ is a $d$-set. We recall the definition of Besov space specialized to our case. For generalities on Besov spaces, we refer to \cite{JoWa}.
\begin{definition}
Let $\mathcal{G}$ be a $d$-set with respect to a $d$-measure $\mu$  and $\alpha=1-\frac{n-d}{p}$. ${B^{p,p}_\alpha(\mathcal{G})}$ is the space of functions for which the following norm is finite:
$$
\|u\|_{B^{p,p}_\alpha(\mathcal{G})}=\|u\|_{L^p(\mathcal{G})}+\left(\quad\iint_{|P-P'|<1}\frac{|u(P)-u(P')|^p}{|P-P'|^{d+p\alpha}}\,\de\mu(P)\,\de\mu(P')\right)^\frac{1}{p}.
$$
\end{definition}

Throughout the paper, $p'$ will denote the H\"older conjugate exponent of $p$. In the following, we denote the dual of the Besov space on a $d$-set $\mathcal{G}$ with $(B^{p,p}_\alpha(\mathcal{G}))'$; this space coincides with the space $B^{p',p'}_{-\alpha}(\mathcal{G})$ (see \cite{JoWa2}).

\begin{theorem}[Stokes formula]\label{stokes}
Let $u\in W^p(\curl,\Omega)$. There exists a linear and continuous operator $l_\tau(u)=u\times\nu$ from $W^p(\curl,\Omega)$ to $((B^{p',p'}_\alpha(\partial\Omega))')^3$.

The following generalized Stokes formula holds for every $v\in W^{1,p'}(\Omega)^n$:
\begin{equation}\label{stokesformula}
\left\langle u\times\nu,v\right\rangle_{((B^{p',p'}_\alpha(\partial\Omega))')^3, B^{p',p'}_\alpha(\partial\Omega)^3}
=\int_\Omega u\cdot \curl v\,\de x +\int_\Omega v\cdot \curl u\,\de x.
\end{equation}

Moreover, the operator $u \mapsto l_\tau(u)=u\times\nu$ is linear and continuous on $B^{p',p'}_\alpha(\partial\Omega)^3$.


\end{theorem}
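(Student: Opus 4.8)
\emph{Plan.} The strategy is to define $l_\tau(u)$ by duality, using the trace theory for $d$-sets on $(\varepsilon,\delta)$ domains as the key external input, and to read the Stokes formula \eqref{stokesformula} off the construction. Two ingredients are needed. First, since $\Omega$ is an $(\varepsilon,\delta)$ domain whose boundary is a $d$-set, the Jonsson--Wallin trace theory (as recalled above, cf.\ \cite{JoWa}, \cite{jonsson91}) provides a bounded, surjective trace operator $\gamma\colon W^{1,p'}(\Omega)^n\to B^{p',p'}_\alpha(\partial\Omega)^n$ admitting a bounded linear right inverse (Whitney extension) $\Ext\colon B^{p',p'}_\alpha(\partial\Omega)^n\to W^{1,p'}(\Omega)^n$, with $\gamma\circ\Ext=\id$; this is precisely where the hypotheses on $\Omega$ enter. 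Second, for $u\in W^p(\curl,\Omega)$ the distributional definition of $\curl u\in L^p(\Omega)^n$ says exactly that the right-hand side of \eqref{stokesformula}, which I abbreviate
\[
\Lambda_u(v):=\int_\Omega u\cdot\curl v\,\de x+\int_\Omega v\cdot\curl u\,\de x ,
\]
vanishes for every $v\in C_c^\infty(\Omega)^3$.

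For fixed $u\in W^p(\curl,\Omega)$ I would then define the functional $l_\tau(u)$ on $B^{p',p'}_\alpha(\partial\Omega)^3$ by
\[
\langle l_\tau(u),\phi\rangle:=\Lambda_u(v),\qquad v\in W^{1,p'}(\Omega)^n,\ \gamma v=\phi
\]
(any extension of $\phi$, e.g.\ $v=\Ext\phi$). Continuity is the soft part: by H\"older's inequality,
\[
|\Lambda_u(v)|\le \|u\|_{L^p(\Omega)^n}\|\curl v\|_{L^{p'}(\Omega)^n}+\|\curl u\|_{L^p(\Omega)^n}\|v\|_{L^{p'}(\Omega)^n}\le C\,\|u\|_{W^p(\curl,\Omega)}\,\|v\|_{W^{1,p'}(\Omega)^n},
\]
and choosing $v=\Ext\phi$ and using the boundedness of $\Ext$ gives $|\langle l_\tau(u),\phi\rangle|\le C\,\|u\|_{W^p(\curl,\Omega)}\,\|\phi\|_{B^{p',p'}_\alpha(\partial\Omega)^3}$. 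Hence $l_\tau(u)\in((B^{p',p'}_\alpha(\partial\Omega))')^3$ with $\|l_\tau(u)\|\le C\|u\|_{W^p(\curl,\Omega)}$; linearity and continuity of $u\mapsto l_\tau(u)$ follow immediately, and \eqref{stokesformula} holds by construction once $v$ is allowed to range over all of $W^{1,p'}(\Omega)^n$.

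I expect the \emph{well-definedness} of the displayed definition to be the main obstacle: one must show $\Lambda_u(v)$ depends only on $\gamma v$, i.e.\ that $\Lambda_u(v)=0$ whenever $\gamma v=0$. The natural route is to identify $\Ker\gamma$ with the $W^{1,p'}$-closure of $C_c^\infty(\Omega)^n$; this characterization of zero-trace fields is the delicate input, and for $(\varepsilon,\delta)$ domains with $d$-set boundary it has to be extracted from the trace theory rather than taken for granted. Granting it, $\Lambda_u$ vanishes on $C_c^\infty(\Omega)^n$ by the distributional identity recorded above, and since $v\mapsto\Lambda_u(v)$ is continuous in the $W^{1,p'}(\Omega)^n$ norm (same H\"older estimate), the vanishing propagates by density to all of $\Ker\gamma$. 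Everything else is a routine duality argument; establishing the kernel characterization cleanly is where I would concentrate the effort.

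Finally, to justify the name $l_\tau(u)=u\times\nu$ I would invoke density of smooth vector fields in $W^p(\curl,\Omega)$ (obtained by extending $u$ to $\R^n$ and mollifying, in the spirit of \cite{Jones}): for $u\in C^\infty(\overline\Omega)^3$ the classical Stokes theorem identifies the functional $l_\tau(u)$ just constructed with the genuine tangential trace $u\times\nu$, so that $l_\tau$ is its unique continuous extension. The closing assertion of the theorem is a regularity refinement valid on the smoother subclass: when $u\in W^{1,p}(\Omega)^3$ the components of $u$ admit genuine traces in the Besov class by the direct trace theorem, so that $u\times\nu$ is itself an element of $B^{p',p'}_\alpha(\partial\Omega)^3$ and $u\mapsto u\times\nu$ is continuous for the Besov norm.
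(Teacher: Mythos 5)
The paper does not actually prove Theorem \ref{stokes}: it defers to \cite{LaVe2} and \cite{CHLTV} ``with small suitable changes''. Your duality construction --- Jonsson--Wallin trace and extension operators between $W^{1,p'}(\Omega)^n$ and $B^{p',p'}_\alpha(\partial\Omega)^n$, definition of $l_\tau(u)$ as the functional $\phi\mapsto\Lambda_u(\Ext\phi)$, continuity by H\"older plus boundedness of $\Ext$, well-definedness via the kernel of the trace --- is essentially the standard route followed in those references, so in spirit you are on the paper's track. Two points, however, deserve attention.

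First, a sign problem that breaks your well-definedness step as written. The distributional definition of $\curl u\in L^p(\Omega)^n$ gives $\int_\Omega u\cdot\curl v\,\de x=\int_\Omega v\cdot\curl u\,\de x$ for $v\in C_c^\infty(\Omega)^3$, i.e.\ it is the \emph{difference} of the two volume integrals that vanishes on test functions, not the sum. With $\Lambda_u$ defined as the sum (which is how \eqref{stokesformula} is printed), $\Lambda_u(v)=2\int_\Omega u\cdot\curl v\,\de x$ on $C_c^\infty(\Omega)^3$, which is not zero in general, so $\Lambda_u(v)$ does \emph{not} depend only on $\gamma v$ and the functional is ill-defined. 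The classical identity is $\int_{\partial\Omega}(\nu\times u)\cdot v\,\de s=\int_\Omega v\cdot\curl u\,\de x-\int_\Omega u\cdot\curl v\,\de x$; the plus sign appears to be a typo in the statement (compare with the Green formula \eqref{greenformula}, where the plus is correct), and your argument is sound for the correctly signed $\Lambda_u$, but you should not assert that the sum vanishes on $C_c^\infty$ --- that claim is false and it is the hinge of the whole construction. Second, you correctly isolate the characterization $\Ker\gamma=\overline{C_c^\infty(\Omega)^n}^{\,W^{1,p'}}$ as the delicate input but leave it entirely unestablished; for $(\varepsilon,\delta)$ domains with $d$-set boundary this is a genuine theorem (it is part of the Jonsson--Wallin theory used in \cite{LaVe2}), not a formality, and a complete proof must quote or prove it. A smaller caveat: the density of $C^\infty(\overline\Omega)^3$ in $W^p(\curl,\Omega)$ ``by extending and mollifying in the spirit of \cite{Jones}'' is not available --- Jones' operator extends Sobolev functions, and extending a field with control only on $\|u\|_{L^p}+\|\curl u\|_{L^p}$ is precisely the hard problem the present paper is devoted to, and even here it is done only under boundary conditions. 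That step is only used to justify the notation $u\times\nu$, so it is dispensable, but as stated it is not justified.
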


\begin{theorem}[Green formula]\label{green}
Let $u\in W^p(\dive,\Omega)$. There exists a linear and continuous operator $l_\nu(u)=u\cdot\nu$ from $W^p(\dive,\Omega)$ to $(B^{p',p'}_\alpha(\partial\Omega))'$.

The following generalized Green formula holds for every $v\in W^{1,p'}(\Omega)$:
\begin{equation}\label{greenformula}
\left\langle u\cdot\nu,v\right\rangle_{(B^{p',p'}_\alpha(\partial\Omega))', B^{p',p'}_\alpha(\partial\Omega)}
=\int_\Omega u\cdot\nabla v\,\de x +\int_\Omega v\dive u\,\de x.
\end{equation}

Moreover, the operator $u \mapsto l_\nu(u)=u\cdot\nu$ is linear and continuous on $B^{p',p'}_\alpha(\partial\Omega)$.


\end{theorem}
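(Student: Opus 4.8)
The plan is to realise the normal trace as the boundary term of the classical divergence theorem, transported to the (possibly non-rectifiable) $d$-set $\partial\Omega$ through the Jonsson--Wallin trace theory. Fix $u\in W^p(\dive,\Omega)$ and consider
\[
G_u(v):=\int_\Omega u\cdot\nabla v\,\de x+\int_\Omega v\,\dive u\,\de x,\qquad v\in W^{1,p'}(\Omega).
\]
By H\"older's inequality, $|G_u(v)|\le\|u\|_{L^p(\Omega)^n}\|\nabla v\|_{L^{p'}(\Omega)^n}+\|\dive u\|_{L^p(\Omega)}\|v\|_{L^{p'}(\Omega)}\le\|u\|_{W^p(\dive,\Omega)}\|v\|_{W^{1,p'}(\Omega)}$, so $G_u$ is a bounded linear functional on $W^{1,p'}(\Omega)$ and $u\mapsto G_u$ is linear. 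The whole point is to show that $G_u(v)$ depends on $v$ only through its boundary trace.

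First I would prove that $G_u$ vanishes on functions with null trace. For $v\in C_c^\infty(\Omega)$ the identity $G_u(v)=0$ is precisely the definition of the distributional divergence of $u$; by the continuity estimate above and a density argument, $G_u$ then vanishes on the whole closure $W^{1,p'}_0(\Omega)$ of $C_c^\infty(\Omega)$ in $W^{1,p'}(\Omega)$. At this point I would invoke the trace theory for $(\varepsilon,\delta)$ domains with $d$-set boundary (the adaptation of Jonsson--Wallin used in \cite{LaVe2,CHLTV}): the trace operator $\mathrm{Tr}\colon W^{1,p'}(\Omega)\to B^{p',p'}_\alpha(\partial\Omega)$ is linear, continuous and surjective, it admits a bounded linear right inverse (extension) $\Ext\colon B^{p',p'}_\alpha(\partial\Omega)\to W^{1,p'}(\Omega)$, and its kernel is exactly $W^{1,p'}_0(\Omega)$. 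Hence $G_u$ factors through $\mathrm{Tr}$.

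This allows me to \emph{define} the normal trace: given $\varphi\in B^{p',p'}_\alpha(\partial\Omega)$, set $\langle l_\nu(u),\varphi\rangle:=G_u(v)$ for any $v\in W^{1,p'}(\Omega)$ with $\mathrm{Tr}\,v=\varphi$; the kernel characterisation makes this independent of the chosen representative. Taking $v=\Ext\,\varphi$ and using the boundedness of $\Ext$ gives
\[
|\langle l_\nu(u),\varphi\rangle|\le\|u\|_{W^p(\dive,\Omega)}\|\Ext\,\varphi\|_{W^{1,p'}(\Omega)}\le C\,\|u\|_{W^p(\dive,\Omega)}\|\varphi\|_{B^{p',p'}_\alpha(\partial\Omega)},
\]
so $l_\nu(u)\in(B^{p',p'}_\alpha(\partial\Omega))'$ with norm controlled by $\|u\|_{W^p(\dive,\Omega)}$; this is the claimed linearity and continuity of $u\mapsto l_\nu(u)$. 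Formula \eqref{greenformula} then holds by construction, choosing $\varphi=\mathrm{Tr}\,v$, and the closing ``moreover'' is read off from the duality $(B^{p,p}_\alpha(\partial\Omega))'=B^{p',p'}_{-\alpha}(\partial\Omega)$ recalled before the statement.

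I expect the entire difficulty to sit in the trace theorem invoked above, and not in the soft factorisation argument. The hard analytic input is the construction of a bounded linear extension from the Besov space $B^{p',p'}_\alpha(\partial\Omega)$ on an irregular $d$-set back into $W^{1,p'}(\Omega)$, together with the identification of $\ker\mathrm{Tr}$ with $W^{1,p'}_0(\Omega)$. This is exactly where the $(\varepsilon,\delta)$ hypothesis is essential: one needs a Whitney decomposition of $\Omega$ and a Jones-type reflection/averaging extension to control the Besov seminorm across the boundary. Once this is granted, the remaining steps use only H\"older's inequality, the definition of the weak divergence, and density of $C_c^\infty(\Omega)$.
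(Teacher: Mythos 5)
Your argument is correct and is essentially the proof the paper has in mind: the paper itself gives no proof, deferring to \cite{LaVe2} and \cite{CHLTV}, where the normal trace is constructed exactly as you describe --- bound the functional $G_u$ by H\"older, kill it on $W^{1,p'}_0(\Omega)$ via the definition of the weak divergence, and factor through the Jonsson--Wallin trace operator using the Jones extension and the bounded right inverse $\Ext$. You also correctly locate the only nontrivial input, namely the surjectivity of the trace onto $B^{p',p'}_\alpha(\partial\Omega)$ (with $\alpha$ taken relative to the exponent $p'$) and the identification of its kernel with $W^{1,p'}_0(\Omega)$, which is precisely what the $(\varepsilon,\delta)$ and $d$-set hypotheses are used for in the cited references.
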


\medskip

For the proofs we refer the reader to \cite{LaVe2} and \cite{CHLTV} with small suitable changes. Examples of domains for which Theorems \ref{stokes} and \ref{green} hold are 2D or 3D Koch-type domains. Formulas \eqref{stokesformula} and \eqref{greenformula} give a rigorous meaning of the boundary conditions in $W^p_0(\curl,\Omega)$ and $W^p_0(\dive,\Omega)$ respectively in terms of the dual of suitable Besov spaces.

\section{Friedrichs and Gaffney inequalities}
\setcounter{equation}{0}

From now on, let $\Omega\subset\R^n$ be a bounded $(\varepsilon,\delta)$ domain, for $n=2,3$, having as boundary $\partial\Omega$ a $d$-set.

\subsection{The case $v\in W^p(\curl,\Omega)\cap W^p_0(\dive,\Omega)$}\label{divergenza}

We first consider the case $v\in W^p(\curl,\Omega)\cap W^p_0(\dive,\Omega)$. We point out that, since $\nu\cdot v=0$ on $\partial\Omega$, we have that
\begin{equation}\label{mediadive}
\int_\Omega \dive v\,\de x=0.
\end{equation}

Let $S\subset\R^n$ be a measurable subset of $\R^n$; we denote by $\bar x$ its barycenter.\\
We construct the affine vector field $P_S(u)$ associated to $S$ and $u\in W^p(\curl,S)\cap W^p_0(\dive,S)$ in the following way:
\begin{equation}\label{Pdef}
P_S(u)(x)=a+B(x-\bar x),
\end{equation}
where $a\in\R^n$ and $B$ is a $n\times n$ matrix with entries $b_{ij}$ defined as
\begin{equation}\label{definizioni}
a=\frac{1}{|S|}\int_S u\,\de x\quad\text{and}\quad b_{ij}=\frac{1}{2|S|}\int_S \left(\frac{\partial u_i}{\partial x_j}+\frac{\partial u_j}{\partial x_i}\right)\,\de x.
\end{equation}

We point out that, from the definition, $B$ is a symmetric matrix. Moreover, by calculation it follows that $\curl(P_S(u))=0$,
\begin{equation}\label{diveP}
\dive (P_S(u))=\frac{1}{|S|}\int_S \dive u\,\de x
\end{equation}
and
\begin{equation}\label{prop2}
\int_S (u-P_S(u))\,\de x=0.
\end{equation}

By direct computation, it holds that
\begin{equation}\label{stimagradiente}
\|\nabla(u-P_S(u))\|_{L^p(S)^{\enne}}\leq C\|\nabla u\|_{L^p(S)^{\enne}},
\end{equation}
where $C$ depends only on $|S|$.

Let us now suppose that \eqref{Fr2} holds in $S$ for $u\in W^p(\curl,S)\cap W^p_0(\dive,S)$. \eqref{stimagradiente} infers that
\begin{equation}\label{int1}
\|\nabla(u-P_S(u))\|_{L^p(S)^{\enne}}\leq C\left(\|\curl u\|_{L^p(S)^n}+\|\dive u\|_{L^p(S)}\right).
\end{equation}
Since $u-P_S(u)$ has vanishing mean value on $S$ from \eqref{prop2}, from Poincar\'e-Wirtinger inequality and \eqref{int1} we have
\begin{equation}\label{poincare}
\|u-P_S(u)\|_{L^p(S)^n}\leq C \diam(S)\left(\|\curl u\|_{L^p(S)^n}+\|\dive u\|_{L^p(S)}\right),
\end{equation}
where $\diam(S)$ is the diameter of $S$. Now, one can easily see that
\begin{equation}\label{stimaPinf}
\|\nabla P_S(u)\|_{L^\infty(S)^{\enne}}\leq\|\nabla u\|_{L^\infty(S)^{\enne}};
\end{equation}
hence, by using again Poincar\'e-Wirtinger inequality (with $p=\infty$), triangle inequality and \eqref{stimaPinf} we get
\begin{equation}\label{stimainf}
\|u-P_S(u)\|_{L^\infty(S)^n}\leq C \diam(S)\|\nabla(u-P_S(u))\|_{L^\infty(S)^{\enne}}\leq 2C\diam(S)\|\nabla u\|_{L^\infty(S)^{\enne}}.
\end{equation}
From now on, we choose $v\in W^{1,\infty}(\Omega)^n$. The thesis will then follow by density arguments. We construct the extension $Ev$ following the approach of Jones \cite{Jones} by using the linear polynomials $P_S(v)$.\\
Let us recall that any open set $\Omega\subset\R^n$ admits a so-called \emph{Whitney decomposition} (see \cite{whitney}, \cite{stein}) into dyadic cubes $S_k$, i.e.
\begin{center}
$\displaystyle\Omega=\bigcup_k S_k$.
\end{center}
This decomposition is such that
\begin{equation}\label{w1}
1\leq\frac{{\rm dist}(S_k,\partial\Omega)}{\ell(S_k)}\leq 4\sqrt{n}\quad\forall\,k,
\end{equation}
\begin{equation}\label{w2}
S_j^0\cap S_k^0=\emptyset\quad\text{if}\,\,j\neq k,
\end{equation}
\begin{equation}\label{w3}
\frac{1}{4}\leq\frac{\ell(S_j)}{\ell(S_k)}\leq 4\quad\text{if}\,\,S_j\cap S_k\neq\emptyset,
\end{equation}
where $S^0$ denotes the interior of $S$ and $\ell(S)$ is the edgelength of a cube $S$.\\
Let now $W_1=\{S_k\}$ be a Whitney decomposition of $\Omega$ and $W_2=\{Q_j\}$ be a Whitney decomposition of $(\Omega^c)^0$. We set
\begin{equation*}
W_3=\left\{Q_j\in W_2\,:\,\ell(Q_j)\leq\frac{\varepsilon\delta}{16n}\right\}.
\end{equation*}
In his paper, Jones has shown that, for every $Q_j\in W_3$, one can choose a $\lq\lq$reflected" cube $Q_j^*=S_k\in W_1$ such that
\begin{equation}\label{numero}
1\leq\frac{\ell(S_k)}{\ell(Q_j)}\leq 4\quad\text{and}\quad{\rm dist}(Q_j,S_k)\leq C\ell(Q_j),
\end{equation}
see Lemma 2.4 and Lemma 2.8 in \cite{Jones}. Moreover, if $Q_j,Q_k\in W_3$ have non-empty intersection, there exists a chain $F_{j,k}=\{Q_j^*=S_1,S_2,\dots,S_m=Q_k^*\}$ of cubes in $W_1$ which connects $Q_j^*$ and $Q_k^*$ such that $S_i\cap S_{i+1}\neq\emptyset$ and $m\leq C(\varepsilon,\delta)$.\\
From \cite{stein}, \cite{whitney} it follows that there exists a partition of unity $\{\phi_j\}$, associated with the Whitney decomposition,  such that
\begin{center}
$\phi_j\in C^\infty(\R^n),\quad\supp\phi_j\subset\frac{17}{16}Q_j,\quad 0\leq\phi_j\leq 1$,\\
$\sum_{Q_j\in W_3}\phi_j=1\,\,\text{on}\,\,\bigcup_{Q_j\in W_3} Q_j\quad$ and $\quad|\nabla\phi_j|\leq C\ell(Q_j)^{-1}\quad\forall\,j$.
\end{center}

\medskip

For $v\in W^{1,\infty}(\Omega)^n$, let $P_j:=P_{Q_j^*}(v)$ be defined as in \eqref{Pdef} and \eqref{definizioni}. We now define the extension $Ev$ of $v$ to $\R^n$ in the following way:
\begin{equation*}
Ev=
\begin{cases}
\displaystyle\sum_{Q_j\in W_3} P_j\phi_j\quad &\text{in}\,\,(\Omega^c)^0,\\[2mm]
v &\text{in}\,\,\Omega.
\end{cases}
\end{equation*}
We point out that, since the boundary of an $(\varepsilon,\delta)$ domain has zero measure (see Lemma 2.3 in \cite{Jones}), it follows that $Ev$ is defined a.e. in $\R^n$.\\
From now on, if not otherwise specified, in this subsection we assume that $v\in W^p(\curl,\Omega)\cap W^p_0(\dive,\Omega)\cap W^p_0(\dive,S)$ for every $S\in W_1$. We now prove some preliminary lemmas. For the sake of completeness, we recall Lemma 2.1 in \cite{Jones}.
\begin{lemma}\label{lemmajones} Let $Q$ be a cube and let $F,G\subset Q$ be two measurable subsets such that $|F|,|G|\geq\gamma|Q|$ for some $\gamma>0$. If $P$ is a polynomial of degree 1, then
\begin{equation*}
\|P\|_{L^p(F)}\leq C(\gamma)\|P\|_{L^p(G)}.
\end{equation*}
\end{lemma}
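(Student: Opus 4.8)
The plan is to reduce everything to the case of the unit cube and then to isolate a single uniform inequality: a lower bound of the form $\|P\|_{L^p(G)}\ge c(\gamma)\|P\|_{L^p(Q_0)}$ valid for every polynomial $P$ of degree $1$ and every admissible $G$. Indeed, since $F\subset Q$ one has the trivial upper bound $\|P\|_{L^p(F)}\le\|P\|_{L^p(Q)}$, so once such a lower bound (applied to $G$) is available the claim follows with $C(\gamma)=c(\gamma)^{-1}$.

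First I would perform the reduction to the unit cube $Q_0=[0,1]^n$. Let $T\colon Q_0\to Q$ be the affine bijection mapping $Q_0$ onto $Q$; its Jacobian $|\det DT|$ is constant and equal to $|Q|$. Writing $\tilde P:=P\circ T$, which is again a polynomial of degree $1$, the change of variables gives $\|P\|_{L^p(H)}^p=|Q|\,\|\tilde P\|_{L^p(T^{-1}(H))}^p$ for every measurable $H\subset Q$, while $|T^{-1}(H)|=|H|/|Q|$, so the measure constraints $|F|,|G|\ge\gamma|Q|$ become $|T^{-1}(F)|,|T^{-1}(G)|\ge\gamma$. The common Jacobian factor $|Q|$ cancels in the ratio, which is why the resulting constant depends only on $\gamma$ (and on $n$, $p$) and not on the size or position of $Q$. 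Thus it suffices to prove the inequality on $Q_0$ for measurable sets of measure at least $\gamma$.

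The heart of the matter is the uniform lower bound on $Q_0$, and the main obstacle is precisely that it must hold uniformly over all measurable $G$ with $|G|\ge\gamma$, not merely for a fixed set: for a fixed $G$ the inequality is just the equivalence of the two norms $\|\cdot\|_{L^p(G)}$ and $\|\cdot\|_{L^p(Q_0)}$ on the finite-dimensional space of affine polynomials, but the equivalence constant a priori degenerates as $G$ varies. I would handle this by a compactness/contradiction argument. Suppose the bound fails; then there exist affine polynomials $P_k$ with $\|P_k\|_{L^p(Q_0)}=1$ and measurable sets $G_k\subset Q_0$ with $|G_k|\ge\gamma$ such that $\|P_k\|_{L^p(G_k)}\to0$. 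The polynomials live in a fixed finite-dimensional space on which all norms are equivalent, so along a subsequence $P_k\to P_\infty$ uniformly on $Q_0$ with $\|P_\infty\|_{L^p(Q_0)}=1$, hence $P_\infty\not\equiv0$. Simultaneously the indicators $\1_{G_k}$ are bounded in $L^\infty(Q_0)$, so by Banach--Alaoglu a further subsequence satisfies $\1_{G_k}\rightharpoonup g$ weakly-$*$, with $0\le g\le1$ and $\int_{Q_0}g\,\de x\ge\gamma$.

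Passing to the limit then closes the argument. Splitting
\[
\int_{Q_0}|P_k|^p\,\1_{G_k}\,\de x=\int_{Q_0}\bigl(|P_k|^p-|P_\infty|^p\bigr)\1_{G_k}\,\de x+\int_{Q_0}|P_\infty|^p\,\1_{G_k}\,\de x,
\]
the first term tends to $0$ by uniform convergence of $|P_k|^p$ to $|P_\infty|^p$ together with $0\le\1_{G_k}\le1$, and the second term tends to $\int_{Q_0}|P_\infty|^p g\,\de x$ by weak-$*$ convergence tested against the fixed $L^1$ function $|P_\infty|^p$. Since the left-hand side tends to $0$, we obtain $\int_{Q_0}|P_\infty|^p g\,\de x=0$; as $g\ge0$ with $\int_{Q_0}g\,\de x\ge\gamma>0$, this forces $P_\infty$ to vanish on the set $\{g>0\}$ of positive measure. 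But a nonzero affine polynomial vanishes only on a hyperplane, which has Lebesgue measure zero --- a contradiction. This yields the uniform constant $c(\gamma)>0$ and completes the proof. The same argument in fact works verbatim for polynomials of any fixed degree, the only structural input being that a nonzero polynomial has a null zero set.
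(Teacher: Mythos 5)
The paper never proves this lemma: it is recalled verbatim as Lemma 2.1 of \cite{Jones} (``for the sake of completeness, we recall\dots''), so there is no internal proof to compare against, and your job was to supply one. Your argument is a correct, self-contained proof for $1\le p<\infty$. The rescaling to the unit cube is handled properly (the Jacobian factor $|Q|$ cancels in the ratio, so the constant depends only on $\gamma$, $n$, $p$), the reduction to a single uniform lower bound $\|P\|_{L^p(G)}\ge c(\gamma)\|P\|_{L^p(Q_0)}$ over all admissible $G$ correctly isolates where the difficulty lies, and the compactness step is sound: finite-dimensionality gives a subsequence $P_k\to P_\infty$ uniformly with $\|P_\infty\|_{L^p(Q_0)}=1$, sequential Banach--Alaoglu applies since $L^1(Q_0)$ is separable, and the limit identity $\int_{Q_0}|P_\infty|^p g\,\de x=0$ together with $\int_{Q_0}g\,\de x\ge\gamma$ forces the affine $P_\infty$ to vanish on a set of positive measure, hence to vanish identically, a contradiction. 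Two remarks. First, the paper also invokes the lemma with $p=\infty$ (e.g.\ to get $\|\nabla P_0\|_{L^\infty(Q_0)}\le C\|\nabla P_0\|_{L^\infty(Q_0^*)}$ in the proof of \eqref{stima4}); your integral splitting does not literally cover that case, though it follows at once from the finite-range case by norm equivalence on the finite-dimensional space of affine polynomials, or by rerunning the same compactness scheme with sup-norms. Second, the compactness route yields a non-explicit $C(\gamma)$; the standard quantitative alternative is a Chebyshev/Remez-type bound on $|\{x\in Q_0:|P(x)|<\lambda\sup_{Q_0}|P|\}|$ for affine $P$, which produces an explicit constant. For the purposes of this paper only the existence of $C(\gamma)$ matters, so your proof is entirely adequate, and your closing observation that the argument extends to polynomials of any fixed degree matches the generality of Jones's original statement.
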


\begin{lemma}\label{lemma3.2} Let $F=\{S_1,\dots,S_m\}$ be a chain of cubes in $W_1$. Then
\begin{equation}\label{stimaPLp}
\|P_{S_1}(v)-P_{S_m}(v)\|_{L^p(S_1)^n}\leq C(m)\ell(S_1)\left(\|\curl v\|_{L^p(\cup_j S_j)^n}+\|\dive v\|_{L^p(\cup_j S_j)}\right)
\end{equation}
and
\begin{equation}\label{stimaPLinf}
\|P_{S_1}(v)-P_{S_m}(v)\|_{L^\infty(S_1)^n}\leq C(m)\ell(S_1)\|\nabla v\|_{L^\infty(\cup_j S_j)^{\enne}}.
\end{equation}
\end{lemma}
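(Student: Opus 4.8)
The plan is to telescope the difference along the chain and to reduce the whole estimate, by means of Lemma \ref{lemmajones}, to a single comparison between two consecutive cubes. First I would record that all cubes in the chain have comparable edge-lengths: iterating \eqref{w3} along $S_1,\dots,S_m$ gives $4^{-(m-1)}\le \ell(S_i)/\ell(S_1)\le 4^{m-1}$ for every $i$, and since consecutive cubes touch, the whole chain is contained in a single cube $Q$ of edge-length at most $C(m)\ell(S_1)$ with $|S_1|,|S_i|\ge \gamma(m)|Q|$. Writing
\[
P_{S_1}(v)-P_{S_m}(v)=\sum_{i=1}^{m-1}\bigl(P_{S_i}(v)-P_{S_{i+1}}(v)\bigr),
\]
each summand is a polynomial of degree one, so Lemma \ref{lemmajones} lets me replace $\|\cdot\|_{L^p(S_1)}$ by $C(m)\|\cdot\|_{L^p(S_i)}$ throughout. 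Thus it suffices to bound $\|P_{S_i}(v)-P_{S_{i+1}}(v)\|_{L^p(S_i)^n}$ for each adjacent pair.

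For the single-pair estimate (the \emph{bridging} estimate), which is the heart of the argument, I would introduce the affine field $\tilde P:=P_{S_i\cup S_{i+1}}(v)$ defined as in \eqref{Pdef}--\eqref{definizioni} over the connected union $S_i\cup S_{i+1}\subset\Omega$; this is legitimate because $v$ is defined on the union. Then
\[
\|P_{S_i}(v)-\tilde P\|_{L^p(S_i)^n}\le \|P_{S_i}(v)-v\|_{L^p(S_i)^n}+\|v-\tilde P\|_{L^p(S_i\cup S_{i+1})^n}.
\]
The first term is controlled by \eqref{poincare} on $S_i$. For the second, since $\int_{S_i\cup S_{i+1}}(v-\tilde P)=0$ by construction (cf. \eqref{prop2}), Poincar\'e--Wirtinger on the union together with \eqref{stimagradiente} applied with $S=S_i\cup S_{i+1}$ gives $\|v-\tilde P\|_{L^p(S_i\cup S_{i+1})^n}\le C\,\ell(S_i)\,\|\nabla v\|_{L^p(S_i\cup S_{i+1})^{\enne}}$, and applying the Gaffney estimate \eqref{Fr2}--\eqref{int1} on each of the two cubes separately (legitimate by the standing assumption $v\in W^p_0(\dive,S)$ for every $S\in W_1$) bounds $\|\nabla v\|$ by $\|\curl v\|_{L^p}+\|\dive v\|_{L^p}$ over $S_i\cup S_{i+1}$. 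Treating $P_{S_{i+1}}(v)-\tilde P$ symmetrically, using Lemma \ref{lemmajones} to pass the affine norms between $S_i$, $S_{i+1}$ and their enclosing cube, and summing over $i$ with the comparability from the first step, yields \eqref{stimaPLp}. The estimate \eqref{stimaPLinf} follows from the same scheme with $p=\infty$: one uses \eqref{stimainf} and the $L^\infty$ Poincar\'e--Wirtinger inequality, landing directly on $\|\nabla v\|_{L^\infty}$ so that no Gaffney conversion is needed, together with the obvious $L^\infty$ counterpart of Lemma \ref{lemmajones}.

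I expect the main obstacle to be precisely this bridging step: one needs a Poincar\'e--Wirtinger inequality on the \emph{non-convex} union $S_i\cup S_{i+1}$ with a constant that is uniform over the decomposition. This is unproblematic when the two cubes share a full $(n-1)$-dimensional face, in which case the union is a John domain whose constant depends only on $n$ and on the bounded ratio in \eqref{w3}; when consecutive cubes of the chain meet only along a lower-dimensional set, one first refines the chain by inserting a face-adjacent intermediate cube, which increases its length only by a factor depending on $n$ and hence leaves the final constant of the admissible form $C(m)$. The remaining work is the bookkeeping needed to keep track of the exponential-in-$m$ constants produced by the repeated use of \eqref{w3} and Lemma \ref{lemmajones}.
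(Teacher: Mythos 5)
Your proposal is correct and follows essentially the same route as the paper's proof: telescoping along the chain, passing between cubes with Lemma \ref{lemmajones}, bridging each adjacent pair through the intermediate polynomial $P_{S_r\cup S_{r+1}}(v)$, and concluding with the Poincar\'e-type estimate \eqref{poincare} (respectively \eqref{stimainf} for the $L^\infty$ case). The only difference is that the paper disposes of your worry about the Poincar\'e constant on the non-convex union by observing that, thanks to \eqref{w3}, only finitely many geometries of $S_r\cup S_{r+1}$ occur, so the constant in \eqref{poincare} can be taken uniform.
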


\begin{proof} We will use \eqref{poincare}, where $S$ is a cube or a union of two neighboring cubes. From \eqref{w3}, it follows that the number of possible geometries of $S$ is finite; hence, we can find a uniform constant in \eqref{poincare}.\\
By using Lemma \ref{lemmajones}, we get
\begin{align*}
&\|P_{S_1}(v)-P_{S_m}(v)\|_{L^p(S_1)^n}\leq\sum_{r=1}^{m-1}\|P_{S_r}(v)-P_{S_{r+1}}(v)\|_{L^p(S_1)^n}\\[2mm]
&\leq c(m)\sum_{r=1}^{m-1}\|P_{S_r}(v)-P_{S_{r+1}}(v)\|_{L^p(S_r)^n}\\[2mm]
&\leq c(m)\sum_{r=1}^{m-1}\left\{\|P_{S_r}(v)-P_{S_r\cup S_{r+1}}(v)\|_{L^p(S_r)^n}+\|P_{S_r\cup S_{r+1}}(v)-P_{S_{r+1}}(v)\|_{L^p(S_{r+1})^n}\right\}\\[2mm]
&\leq c(m)\sum_{r=1}^{m-1}\left\{\|P_{S_r}(v)-v\|_{L^p(S_r)^n}+\|P_{S_{r+1}}(v)-v\|_{L^p(S_{r+1})^n}+\|P_{S_r\cup S_{r+1}}(v)-v\|_{L^p(S_r\cup S_{r+1})^n}\right\}\\[2mm]
&\leq Cc(m)\ell(S_1)\left(\|\curl v\|_{L^p(\cup_j S_j)^n}+\|\dive v\|_{L^p(\cup_j S_j)}\right),
\end{align*}
where we used the fact that $F$ is a chain, integral properties and finally \eqref{poincare}.\\
The proof of \eqref{stimaPLinf} follows analogously by using \eqref{stimainf}.
\end{proof}

For every $Q_j,Q_k\in W_3$ with non-empty intersection, we now choose a chain $F_{j,k}$ which connects $Q_j^*$ and $Q_k^*$ and such that $m\leq C(\varepsilon,\delta)$. We define
\begin{equation*}
F(Q_j)=\bigcup_{Q_k\in W_3, Q_j\cap Q_k\neq\emptyset} F_{j,k};
\end{equation*}
hence
\begin{equation}\label{finito}
\left\|\sum_{Q_k\,:\,Q_j\cap Q_k\neq\emptyset} \chi_{\cup F_{j,k}}\right\|_{L^\infty(\R^n)}\leq C\quad\forall\,Q_j\in W_3.
\end{equation}
We now prove two lemmas which allow us to control the norms of $Ev$, $\dive (Ev)$, $\curl (Ev)$ and $\nabla (Ev)$ in $(\Omega^c)^0$.

\begin{lemma}\label{lemma1} Let $Q_0\in W_3$. We have that:
\begin{equation}\label{stima1}
\|Ev\|_{L^p(Q_0)^n}\leq C\left(\|v\|_{L^p(Q_0^*)^n}+\ell(Q_0)(\|\curl v\|_{L^p(F(Q_0))^n}+\|\dive v\|_{L^p(F(Q_0))})\right),
\end{equation}
\begin{equation}\label{stima2}
\|\curl(Ev)\|_{L^p(Q_0)^n}+\|\dive (Ev)\|_{L^p(Q_0)}\leq C\left(\|\curl v\|_{L^p(F(Q_0))^n}+\|\dive v\|_{L^p(F(Q_0))}\right),
\end{equation}
\begin{equation}\label{stima3}
\|Ev\|_{L^\infty(Q_0)^n}\leq C\left(\|v\|_{L^\infty(Q_0^*)^n}+\ell(Q_0)\|\nabla v\|_{L^\infty(F(Q_0))^{\enne}}\right),
\end{equation}
\begin{equation}\label{stima4}
\|\nabla(Ev)\|_{L^\infty(Q_0)^{\enne}}\leq C\|\nabla v\|_{L^\infty(F(Q_0))^{\enne}}.
\end{equation}

\end{lemma}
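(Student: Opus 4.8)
The plan is to exploit the partition of unity on the single cube $Q_0$ to reduce all four estimates to the polynomial differences $P_j-P_0$, which are then controlled by the chain Lemma \ref{lemma3.2}. Write $P_0:=P_{Q_0^*}(v)$ and $P_j:=P_{Q_j^*}(v)$. On $Q_0$ only finitely many $\phi_j$ are nonzero --- those $Q_j\in W_3$ whose (dilated) support meets $Q_0$ --- and by \eqref{w3} these are neighbours of $Q_0$ of comparable edgelength, so their number is bounded by a constant depending only on $n$ (this is where \eqref{finito} enters); for each such $j$ the chain $F_{0,j}$ is defined and contained in $F(Q_0)$. Since $\sum_j\phi_j\equiv1$ on $Q_0$, also $\sum_j\nabla\phi_j\equiv0$ there, and I would first record
\begin{equation*}
Ev=P_0+\sum_j(P_j-P_0)\phi_j\qquad\text{on }Q_0 .
\end{equation*}

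The single building block I would establish is
\begin{equation*}
\|P_j-P_0\|_{L^p(Q_0)^n}\le C\ell(Q_0)\big(\|\curl v\|_{L^p(F(Q_0))^n}+\|\dive v\|_{L^p(F(Q_0))}\big),
\end{equation*}
together with its $L^\infty$ analogue with right-hand side $C\ell(Q_0)\|\nabla v\|_{L^\infty(F(Q_0))^{\enne}}$. Since $P_j-P_0$ is affine, I would transfer its norm from the cube $Q_0\subset(\Omega^c)^0$ to its reflection $Q_0^*\subset\Omega$ via Lemma \ref{lemmajones}: by \eqref{numero}, ${\rm dist}(Q_0,Q_0^*)\le C\ell(Q_0)$ and $1\le\ell(Q_0^*)/\ell(Q_0)\le4$, so $Q_0$ and $Q_0^*$ sit inside a single cube $\tilde Q$ of edgelength $\le C\ell(Q_0)$ with $|Q_0|,|Q_0^*|\ge\gamma|\tilde Q|$, whence $\|P_j-P_0\|_{L^p(Q_0)}\le C\|P_j-P_0\|_{L^p(Q_0^*)}$. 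On $Q_0^*$ the chain $F_{0,j}=\{Q_0^*=S_1,\dots,S_m=Q_j^*\}$ has $m\le C(\varepsilon,\delta)$, so \eqref{stimaPLp} (resp. \eqref{stimaPLinf}) applies verbatim and, using $\ell(Q_0^*)\le4\ell(Q_0)$, gives the claim.

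With this block, the four estimates follow by differentiating the identity and using the algebraic properties of $P_S$. For \eqref{stima1} I bound $\|P_0\|_{L^p(Q_0)}\le C\|P_0\|_{L^p(Q_0^*)}$ (again Lemma \ref{lemmajones}) and $\|P_0\|_{L^p(Q_0^*)}\le\|v\|_{L^p(Q_0^*)}+\|v-P_0\|_{L^p(Q_0^*)}$, the last term controlled by \eqref{poincare}; adding $\sum_j\|P_j-P_0\|_{L^p(Q_0)}$ and using $Q_0^*\subset F(Q_0)$ gives \eqref{stima1}. For \eqref{stima2} I use $\curl P_j=0$, so $\curl(Ev)=\sum_j\nabla\phi_j\times(P_j-P_0)$ and $|\nabla\phi_j|\le C\ell(Q_0)^{-1}$ turn the block into the curl bound; the divergence splits as $\dive(Ev)=\sum_j\nabla\phi_j\cdot(P_j-P_0)+\sum_j\phi_j\,\dive P_j$, the first sum being handled like the curl, while by \eqref{diveP} and Hölder's inequality $|\dive P_j|\le|Q_j^*|^{-1/p}\|\dive v\|_{L^p(Q_j^*)}$, so $\|\phi_j\,\dive P_j\|_{L^p(Q_0)}\le C\|\dive v\|_{L^p(Q_j^*)}$ because $|Q_0|\simeq|Q_j^*|$. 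Estimates \eqref{stima3}--\eqref{stima4} follow the identical pattern, replacing \eqref{poincare}, \eqref{stimaPLp} by \eqref{stimainf}, \eqref{stimaPLinf} and \eqref{stimaPinf}; for the gradient I differentiate to $\nabla(Ev)=\nabla P_0+\sum_j(\nabla P_j-\nabla P_0)\phi_j+\sum_j(P_j-P_0)\otimes\nabla\phi_j$, bound $\|\nabla P_0\|_{L^\infty}\le\|\nabla v\|_{L^\infty(Q_0^*)^{\enne}}$ by \eqref{stimaPinf}, and control the constant matrix difference by $\|\nabla(P_j-P_0)\|\le C\ell(Q_0)^{-1}\|P_j-P_0\|_{L^\infty(Q_0)^n}$ (the gradient of an affine map on a cube is comparable to its oscillation over the edgelength), which is again the building block.

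The main obstacle is the transfer step: $P_0,P_j$ are adapted to cubes $Q_0^*,Q_j^*$ inside $\Omega$, whereas the norms to be controlled live on $Q_0\subset(\Omega^c)^0$; the only tool relating the two is Lemma \ref{lemmajones}, and making it applicable forces the precise Whitney/Jones geometry \eqref{numero}, \eqref{w3} (comparable edgelengths, controlled distance) to produce the common cube $\tilde Q$ and a uniform $\gamma$. A secondary bookkeeping point is that \eqref{finito} must be used to keep the number of neighbouring chains, hence the number of summands, bounded independently of $Q_0$.
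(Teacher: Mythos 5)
Your proof follows essentially the same route as the paper: the decomposition $Ev=P_0+\sum_j(P_j-P_0)\phi_j$ on $Q_0$, transfer of polynomial norms from $Q_0$ to $Q_0^*$ via Lemma \ref{lemmajones} and the Jones geometry \eqref{numero}, and control of $P_j-P_0$ through the chain Lemma \ref{lemma3.2} together with \eqref{poincare}, \eqref{stimainf} and \eqref{stimaPinf}. The only substantive deviation is in the divergence part of \eqref{stima2}: the paper invokes the standing assumption $v\in W^p_0(\dive,S)$ for every $S\in W_1$, so that $\dive P_j=0$ by \eqref{diveP} and the term $\sum_j\phi_j\dive P_j$ disappears, whereas you retain that term and bound it directly by H\"older, $\|\phi_j\dive P_j\|_{L^p(Q_0)}\le(|Q_0|/|Q_j^*|)^{1/p}\|\dive v\|_{L^p(Q_j^*)}\le C\|\dive v\|_{L^p(F(Q_0))}$ --- a correct and in fact slightly more robust argument, since it dispenses with that extra hypothesis.
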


\begin{proof} We recall that, from the definition of $Ev$, on $Q_0$ we have that $\displaystyle Ev=\sum_{Q_j\in W_3} P_j\phi_j$. Moreover, since $\displaystyle\sum_{Q_j\in W_3}\phi_j\equiv 1$ on $\displaystyle\bigcup_{Q_j\in W_3} Q_j$, we get
\begin{equation*}
\left\|\sum_{Q_j\in W_3}P_j\phi_j\right\|_{L^p(Q_0)^n}\leq\|P_0\|_{L^p(Q_0)^n}+\left\|\sum_{Q_j\in W_3}(P_j-P_0)\phi_j\right\|_{L^p(Q_0)^n}:=A+B.
\end{equation*}
We now estimate $A$ and $B$ separately. As to $A$, from Lemma \ref{lemmajones} and \eqref{poincare}, we get
\begin{align}\label{stimaAprima}
A&=\|P_0\|_{L^p(Q_0)^n}\leq C\|P_0\|_{L^p(Q_0^*)^n}\leq C(\|P_0-v\|_{L^p(Q_0^*)^n}+\|v\|_{L^p(Q_0^*)^n})\notag\\[2mm]
&\leq C(\ell(Q_0)(\|\curl v\|_{L^p(Q_0^*)^n}+\|\dive v\|_{L^p(Q_0^*)})+\|v\|_{L^p(Q_0^*)^n}),
\end{align}
where we estimated $\ell(Q_0^*)$ with $\ell(Q_0)$ using \eqref{numero}, since $Q_0\in W_3$. We point out that, thanks to \eqref{finito}, the norms in the right-hand side of \eqref{stimaAprima} can be estimated in terms of the $L^p(\cup_j F_{0,j})$-norms. Hence, we get the following:
\begin{equation}\label{stimaA}
A\leq C(\ell(Q_0)(\|\curl v\|_{L^p(\cup_j F_{0,j})^n}+\|\dive v\|_{L^p(\cup_j F_{0,j})})+\|v\|_{L^p(\cup_j F_{0,j})^n}).
\end{equation}
As to $B$, from the properties of $\phi_j$ it is sufficient to bound $\|P_j-P_0\|_{L^p(Q_0)^n}$. By using again Lemma \ref{lemmajones}, \eqref{stimaPLp} and proceeding as above, we get
\begin{equation}\label{stimaB}
B\leq\|P_j-P_0\|_{L^p(Q_0)^n}\leq C\|P_j-P_0\|_{L^p(Q_0^*)^n}\leq C\ell(Q_0)(\|\curl v\|_{L^p(\cup_j F_{0,j})^n}+\|\dive v\|_{L^p(\cup_j F_{0,j})}).
\end{equation}
Hence from \eqref{stimaA} and \eqref{stimaB} we get \eqref{stima1}. Estimate \eqref{stima3} follows similarly by using \eqref{stimainf} and \eqref{stimaPLinf}.\\
We now remark that, on $Q_0$, we have that
\begin{equation*}
Ev=\sum_{Q_j\in W_3} P_j\phi_j=P_0\sum_{Q_j\in W_3}\phi_j+\sum_{Q_j\in W_3} (P_j-P_0)\phi_j=P_0+\sum_{Q_j\in W_3} (P_j-P_0)\phi_j.
\end{equation*}
Therefore, since $\curl(P_0)=0$, we have that
\begin{equation*}
\curl(Ev)=\sum_{Q_j\in W_3}\curl((P_j-P_0)\phi_j).
\end{equation*}
Moreover, from \eqref{mediadive} and \eqref{diveP} it follows that
\begin{equation*}
\dive(Ev)=\sum_{Q_j\in W_3}\dive((P_j-P_0)\phi_j).
\end{equation*}
Since there is a finite number of cubes $Q_j$ such that $\phi_j\neq 0$ in $Q_0$ and having non-empty intersection with $Q_0$, from \eqref{w3} we have that $\ell(Q_j)\geq\frac{1}{4}\ell(Q_0)$. From the properties of $\phi_j$, this implies that $|\nabla\phi_j|\leq\frac{C}{4}\ell(Q_0)^{-1}$.\\
By using vector identities, Lemma \ref{lemmajones} and \eqref{stimaPLp}, we have that
\begin{align*}
&\|\curl((P_j-P_0)\phi_j)\|_{L^p(Q_0)^n}=\|(P_j-P_0)\times\nabla\phi_j\|_{L^p(Q_0)^n}\leq C\|P_j-P_0\|_{L^p(Q_0)^n}\|\nabla\phi_j\|_{L^p(Q_0)^n}\\[2mm]
&\leq C\ell(Q_0)^{-1}\|P_j-P_0\|_{L^p(Q_0)^n}\leq C\ell(Q_0)^{-1}\|P_j-P_0\|_{L^p(Q_0^*)^n}\\[2mm]
&\leq C(\|\curl v\|_{L^p(\cup_j F_{0,j})^n}+\|\dive v\|_{L^p(\cup_j F_{0,j})}).
\end{align*}
As to divergence term, similarly as above we get
\begin{align*}
&\|\dive((P_j-P_0)\phi_j)\|_{L^p(Q_0)}=\|(P_j-P_0)\cdot\nabla\phi_j\|_{L^p(Q_0)}\leq\|P_j-P_0\|_{L^p(Q_0)^n}\|\nabla\phi_j\|_{L^p(Q_0)^n}\\[2mm]
&\leq C(\|\curl v\|_{L^p(\cup_j F_{0,j})^n}+\|\dive v\|_{L^p(\cup_j F_{0,j})}).
\end{align*}
Summing up in $j$ we get
\begin{equation*}
\|\curl(Ev)\|_{L^p(Q_0)^n}+\|\dive(Ev)\|_{L^p(Q_0)}\leq C(\|\curl v\|_{L^p(F(Q_0))^n}+\|\dive v\|_{L^p(F(Q_0))}),
\end{equation*}
i.e. \eqref{stima2}.\\
We are left to prove \eqref{stima4}. Similarly as above, we have that
\begin{equation*}
\nabla(Ev)=\nabla P_0+\sum_{Q_j\in W_3} \nabla((P_j-P_0)\phi_j).
\end{equation*}
From Lemma \ref{lemmajones} and \eqref{stimaPinf}, we get
\begin{equation*}
\|\nabla P_0\|_{L^\infty(Q_0)^{\enne}}\leq C\|\nabla P_0\|_{L^\infty(Q_0^*)^{\enne}}\leq C\|\nabla v\|_{L^\infty(Q_0^*)^{\enne}}\leq C\|\nabla v\|_{L^\infty(\cup_j F_{0,j})^{\enne}}.
\end{equation*}
As above, it follows that
\begin{align*}
&\|\nabla(P_j-P_0)\|_{L^\infty(Q_0)^{\enne}}\leq C\|\nabla(P_j-P_0)\|_{L^\infty(Q_0^*)^{\enne}}\leq C\|\nabla(P_j-P_0)\|_{L^\infty(Q_0^*\cup Q_j^*)^{\enne}}\\[2mm]
&\leq C\|\nabla v\|_{L^\infty(Q_0^*\cup Q_j^*)^{\enne}}\leq C\|\nabla v\|_{L^\infty(\cup_j F_{0,j})^{\enne}}
\end{align*}
From these inequalities, \eqref{stima4} follows and the proof is complete.
\end{proof}
We now prove a result similar to Lemma \ref{lemma1}, which relates to the cubes of $(\Omega^c)^0$ not belonging to $W_3$.

\begin{lemma}\label{lemma2} Let $Q_0\in W_2\setminus W_3$. We have that:
\begin{equation}\label{stima1comp}
\|Ev\|_{L^p(Q_0)^n}\leq C\sum_{Q_j\in W_3\,:\,Q_j\cap Q_0\neq\emptyset}\left(\|v\|_{L^p(Q_j^*)^n}+\|\curl v\|_{L^p(Q_j^*)^n}+\|\dive v\|_{L^p(Q_j^*)}\right),
\end{equation}
\begin{align}
\|\curl(Ev)\|_{L^p(Q_0)^n}&+\|\dive(Ev)\|_{L^p(Q_0)}\notag\\[2mm]
&\leq C\sum_{Q_j\in W_3\,:\,Q_j\cap Q_0\neq\emptyset}\left(\|v\|_{L^p(Q_j^*)^n}+\|\curl v\|_{L^p(Q_j^*)^n}+\|\dive v\|_{L^p(Q_j^*)}\right),\label{stima2comp}
\end{align}
\begin{equation}\label{stima3comp}
\|Ev\|_{L^\infty(Q_0)^n}\leq C\sum_{Q_j\in W_3\,:\,Q_j\cap Q_0\neq\emptyset}\left(\|v\|_{L^\infty(Q_j^*)^n}+\|\nabla v\|_{L^\infty(Q_j^*)^{\enne}}\right),
\end{equation}
\begin{equation}\label{stima4comp}
\|\nabla(Ev)\|_{L^\infty(Q_0)^{\enne}}\leq C\sum_{Q_j\in W_3\,:\,Q_j\cap Q_0\neq\emptyset}\left(\|v\|_{L^\infty(Q_j^*)^n}+\|\nabla v\|_{L^\infty(Q_j^*)^{\enne}}\right).
\end{equation}

\end{lemma}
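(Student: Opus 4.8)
The plan is to exploit the one structural feature that distinguishes this case from Lemma \ref{lemma1}: a cube $Q_0\in W_2\setminus W_3$ is \emph{large}, since by construction $\ell(Q_0)>\varepsilon\delta/(16n)$, while the boundedness of $\Omega$ bounds all Whitney edgelengths from above. Hence every factor $\ell(Q_0)$, $\diam(Q_j^*)$ and, decisively, $\ell(Q_0)^{-1}$ that will appear is bounded by a constant $C(\varepsilon,\delta)$ and can be absorbed. This is exactly what replaces the partition-of-unity cancellation of Lemma \ref{lemma1}: since $Q_0\notin W_3$ we no longer have $\sum_{W_3}\phi_j\equiv1$ on $Q_0$ and cannot subtract a single $P_0$, so instead I would estimate $Ev=\sum_{Q_j\in W_3}P_j\phi_j$ term by term. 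Only the $Q_j\in W_3$ with $\supp\phi_j\cap Q_0\neq\emptyset$ contribute; by \eqref{w3} and \eqref{numero} these are finitely many, uniformly bounded in number, and all of edgelength comparable to $\ell(Q_0)$, so the final summation over $j$ is harmless.

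For \eqref{stima1comp} I would start from $\|Ev\|_{L^p(Q_0)^n}\le\sum_j\|P_j\|_{L^p(Q_0)^n}$ and transfer each norm back to the reflected cube $Q_j^*$. Because $Q_j\cap Q_0\neq\emptyset$, $\ell(Q_j^*)\approx\ell(Q_0)$ and $\mathrm{dist}(Q_j,Q_j^*)\le C\ell(Q_j)$ by \eqref{numero}, both $Q_0$ and $Q_j^*$ lie in a common cube of comparable size, each occupying a fixed proportion of it; Lemma \ref{lemmajones} then gives $\|P_j\|_{L^p(Q_0)^n}\le C\|P_j\|_{L^p(Q_j^*)^n}$. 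Writing $\|P_j\|_{L^p(Q_j^*)^n}\le\|v-P_j\|_{L^p(Q_j^*)^n}+\|v\|_{L^p(Q_j^*)^n}$ and invoking \eqref{poincare} on $S=Q_j^*$ (with $\diam(Q_j^*)\le C$ absorbed) produces the bound by $\|v\|_{L^p(Q_j^*)^n}+\|\curl v\|_{L^p(Q_j^*)^n}+\|\dive v\|_{L^p(Q_j^*)}$, and summing gives \eqref{stima1comp}. The sup-norm estimate \eqref{stima3comp} is identical, using \eqref{stimainf} in place of \eqref{poincare} and the elementary equivalence of $L^\infty$ norms of a degree-one polynomial on two comparable nearby cubes.

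For the differential estimates the key point is that each $P_j$ is both curl-free (noted after \eqref{definizioni}) and divergence-free: under the standing hypothesis $v\in W^p_0(\dive,S)$ for every $S\in W_1$, the Green formula yields $\int_{Q_j^*}\dive v\,\de x=0$, hence $\dive P_j=0$ by \eqref{diveP}. Then the identities $\curl(P_j\phi_j)=\nabla\phi_j\times P_j$ and $\dive(P_j\phi_j)=\nabla\phi_j\cdot P_j$, together with $|\nabla\phi_j|\le C\ell(Q_j)^{-1}\le C\ell(Q_0)^{-1}$ from \eqref{w3}, reduce both terms of \eqref{stima2comp} to $C\ell(Q_0)^{-1}\sum_j\|P_j\|_{L^p(Q_0)^n}$, and since $\ell(Q_0)^{-1}\le16n/(\varepsilon\delta)$ this is controlled exactly as in \eqref{stima1comp}. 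For \eqref{stima4comp} I would split $\nabla(P_j\phi_j)=\phi_j\nabla P_j+P_j\otimes\nabla\phi_j$, bound the first term by $\|\nabla v\|_{L^\infty(Q_j^*)^{\enne}}$ through \eqref{stimaPinf} (recall $\nabla P_j$ is constant), and handle $P_j\otimes\nabla\phi_j$ with the same $\ell(Q_0)^{-1}$ argument.

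I expect the only genuine difficulty, beyond bookkeeping, to be the loss of the cancellation mechanism: in Lemma \ref{lemma1} subtracting $P_0$ supplied the factor $\ell(Q_0)$ that tamed $\nabla\phi_j$, and no such $P_0$ is available here. The resolution is precisely the lower bound $\ell(Q_0)>\varepsilon\delta/(16n)$ characterizing $W_2\setminus W_3$, which turns every apparently singular $\ell(Q_0)^{-1}$ into a constant depending only on $\varepsilon$ and $\delta$. Everything else is a finite sum of applications of Lemma \ref{lemmajones}, \eqref{poincare}, \eqref{stimainf} and \eqref{stimaPinf}, with the number of contributing neighbours bounded by the Whitney structure.
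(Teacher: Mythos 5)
Your proposal is correct and follows essentially the same route as the paper: estimate $Ev=\sum P_j\phi_j$ term by term over the finitely many $Q_j\in W_3$ meeting $Q_0$, transfer $\|P_j\|_{L^p(Q_0)^n}$ to $Q_j^*$ via Lemma \ref{lemmajones}, apply the triangle inequality with \eqref{poincare} (resp.\ \eqref{stimainf}, \eqref{stimaPinf} for the sup-norm bounds), and absorb $\ell(Q_0)^{-1}$ using the lower bound $\ell(Q_0)>\varepsilon\delta/(16n)$ that characterizes $W_2\setminus W_3$. Your explicit remark that $\dive P_j=0$ under the standing hypothesis $v\in W^p_0(\dive,S)$ for all $S\in W_1$ is a point the paper leaves implicit, but the argument is the same.
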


\begin{proof} We start by pointing out that, if $\phi_j\neq 0$ on $Q_0$, we have $Q_j\cap Q_0\neq\emptyset$ (since $\supp\phi_j\subset\frac{17}{16}Q_j)$. Therefore, since $Q_0\in W_2\setminus W_3$, we have
\begin{equation}\label{stimalunghezza}
\ell (Q_j)\geq\frac{1}{4}\ell (Q_0)\geq\frac{\varepsilon\delta}{64n}.
\end{equation}
On $Q_0$ we have that
\begin{equation*}
|Ev|=\left|\sum_{Q_j\in W_3\,:\,Q_j\cap Q_0\neq\emptyset} P_j\phi_j\right|\leq \sum_{Q_j\in W_3\,:\,Q_j\cap Q_0\neq\emptyset} |P_j|.
\end{equation*}
From Lemma \ref{lemmajones} and triangle inequality, we get
\begin{equation}\label{interm}
\|P_j\|_{L^p(Q_0)^n}\leq C\|P_j\|_{L^p(Q_j)^n}\leq C\|P_j\|_{L^p(Q_j^*)^n}\leq C(\|P_j-v\|_{L^p(Q_j^*)^n}+\|v\|_{L^p(Q_j^*)^n}).
\end{equation}

From \eqref{interm} and \eqref{poincare}, it follows that
\begin{align*}
&\|Ev\|_{L^p(Q_0)^n}\leq\sum_{Q_j\in W_3\,:\,Q_j\cap Q_0\neq\emptyset}\|P_j\|_{L^p(Q_0)^n}\\[3mm]
&\leq C\sum_{Q_j\in W_3\,:\,Q_j\cap Q_0\neq\emptyset}\left(\|v\|_{L^p(Q_j^*)^n}+\diam(Q_j^*)(\|\curl v\|_{L^p(Q_j^*)^n}+\|\dive v\|_{L^p(Q_j^*)})\right).
\end{align*}
Sine $\Omega$ is bounded, we can estimate $\diam(Q_j^*)$ with a constant depending on $\diam(\Omega)$, thus proving \eqref{stima1comp}.\\
We come to \eqref{stima2comp}. By proceeding as in the proof of Lemma \ref{lemma1} and by using \eqref{stimalunghezza}, the following estimate holds:
\bigskip
\begin{align*}
&\|\curl(Ev)\|_{L^p(Q_0)^n}+\|\dive(Ev)\|_{L^p(Q_0)}=\left\|\sum_{Q_j\in W_3\,:\,Q_j\cap Q_0\neq\emptyset}\curl(P_j\phi_j)\right\|_{L^p(Q_0)^n}\\[2mm]
&+\left\|\sum_{Q_j\in W_3\,:\,Q_j\cap Q_0\neq\emptyset}\dive(P_j\phi_j)\right\|_{L^p(Q_0)}\leq C\sum_{Q_j\in W_3\,:\,Q_j\cap Q_0\neq\emptyset}\|P_j\|_{L^p(Q_0)^n}\|\nabla\phi_j\|_{L^p(Q_0)^{\enne}}\\[2mm]
&\leq C\ell(Q_0)^{-1}\sum_{Q_j\in W_3\,:\,Q_j\cap Q_0\neq\emptyset}\|P_j\|_{L^p(Q_0)^n}\leq C\left(\frac{\varepsilon\delta}{64n}\right)^{-1}\sum_{Q_j\in W_3\,:\,Q_j\cap Q_0\neq\emptyset}\|P_j\|_{L^p(Q_0^*)^n}.
\end{align*}
By proceeding as above, we get \eqref{stima2comp}. Estimates \eqref{stima3comp} and \eqref{stima4comp} follow in a similar way by using \eqref{stimainf} and \eqref{stimaPinf}.
\end{proof}

From the above lemmas we obtain the following result.

\begin{prop}\label{proposiz} For every $v\in W^{1,\infty}(\Omega)^n$ such that $v\in W^p(\curl,\Omega)\cap W^p_0(\dive,\Omega)$ we have
\begin{align}
\|Ev\|_{L^p((\Omega^c)^0)^n}&+\|\dive(Ev)\|_{L^p((\Omega^c)^0)}+\|\curl(Ev)\|_{L^p((\Omega^c)^0)^n}\notag\\[2mm]
&\leq C\left(\|v\|_{L^p(\Omega)^n}+\|\dive v\|_{L^p(\Omega)}+\|\curl v\|_{L^p(\Omega)^n}\right)\label{corol1}
\end{align}
and
\begin{equation}\label{corol2}
\|Ev\|_{W^{1,\infty}((\Omega^c)^0)^n}\leq C\|v\|_{W^{1,\infty}(\Omega)^n}.
\end{equation}
\end{prop}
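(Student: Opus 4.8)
The plan is to cover $(\Omega^c)^0$ by its Whitney cubes, $(\Omega^c)^0=\bigcup_{Q_0\in W_2}Q_0$, to split this family according to whether $Q_0\in W_3$ or $Q_0\in W_2\setminus W_3$, and then to sum the local estimates of Lemmas \ref{lemma1} and \ref{lemma2}. Since the cubes of a Whitney decomposition have pairwise disjoint interiors by \eqref{w2}, for $p<\infty$ one has $\|Ev\|_{L^p((\Omega^c)^0)^n}^p=\sum_{Q_0\in W_2}\|Ev\|_{L^p(Q_0)^n}^p$, and likewise for $\dive(Ev)$ and $\curl(Ev)$; inequality \eqref{corol1} will follow by raising each of the three quantities to the $p$-th power and treating every summand by the appropriate local bound.

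For the cubes $Q_0\in W_3$ I would use \eqref{stima1} and \eqref{stima2}. Raising \eqref{stima1} to the $p$-th power and using $(a+b)^p\le 2^{p-1}(a^p+b^p)$ together with $\ell(Q_0)\le\diam(\Omega)$, I bound $\|Ev\|_{L^p(Q_0)^n}^p$ by a constant times $\|v\|_{L^p(Q_0^*)^n}^p+\|\curl v\|_{L^p(F(Q_0))^n}^p+\|\dive v\|_{L^p(F(Q_0))}^p$; the divergence–curl estimate \eqref{stima2} is handled identically. For the cubes $Q_0\in W_2\setminus W_3$ the bounds \eqref{stima1comp}--\eqref{stima2comp} replace the single reflected cube by a finite sum over the neighbours $Q_j\in W_3$ with $Q_j\cap Q_0\neq\emptyset$; since that number is uniformly bounded by the Whitney property \eqref{w3}, raising to the $p$-th power costs only a harmless constant factor and produces a sum of terms over the cubes $Q_j^*$.

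The decisive step is to sum these $p$-th powers over all $Q_0$ and to recognise that each Whitney cube of $\Omega$ is charged only a bounded number of times. For the $W_3$ contribution I would invoke the geometric properties \eqref{numero}, the bound $m\le C(\varepsilon,\delta)$ on the chain lengths, and the finite-overlap property \eqref{finito} to conclude that the families $\{Q_0^*\}$ and $\{F(Q_0)\}$ have bounded overlap, i.e. $\sum_{Q_0}\chi_{Q_0^*}\le C$ and $\sum_{Q_0}\chi_{F(Q_0)}\le C$ a.e.; for the $W_2\setminus W_3$ contribution I would additionally use that, by \eqref{w3} and \eqref{stimalunghezza}, each $Q_j\in W_3$ is a neighbour of only boundedly many $Q_0\in W_2\setminus W_3$. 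The underlying reason in both cases is that, by \eqref{numero}, a given $S_k\in W_1$ can be the reflected cube of — or belong to a chain associated with — only boundedly many $Q_0$, since all such $Q_0$ have edgelength comparable to $\ell(S_k)$ and lie within distance $C\ell(S_k)$, while Whitney cubes of comparable size in a bounded neighbourhood are finite in number. Interchanging the order of summation then yields $\sum_{Q_0}\|v\|_{L^p(Q_0^*)^n}^p\le C\|v\|_{L^p(\Omega)^n}^p$ and its analogues for the curl and divergence terms with $F(Q_0)$ in place of $Q_0^*$; taking $p$-th roots gives \eqref{corol1}. I expect this bounded-overlap bookkeeping — transferring the local estimates into a single global constant without losing control of the chains — to be the main obstacle.

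Finally, \eqref{corol2} is softer because it is an $L^\infty$ statement requiring suprema rather than sums. Taking the supremum over $Q_0\in W_2$ of the local bounds \eqref{stima3}--\eqref{stima4} (for $Q_0\in W_3$) and \eqref{stima3comp}--\eqref{stima4comp} (for $Q_0\in W_2\setminus W_3$), using once more $\ell(Q_0)\le\diam(\Omega)$, the uniform bound on the number of neighbouring cubes, and the fact that every $L^\infty$ norm on a subset of $\Omega$ is dominated by $\|v\|_{W^{1,\infty}(\Omega)^n}$, one obtains $\|Ev\|_{L^\infty((\Omega^c)^0)^n}+\|\nabla(Ev)\|_{L^\infty((\Omega^c)^0)^{\enne}}\le C\|v\|_{W^{1,\infty}(\Omega)^n}$, which is \eqref{corol2}; here no overlap counting is needed, since the estimate is pointwise in $Q_0$.
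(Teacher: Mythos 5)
Your proposal is correct and takes essentially the same route as the paper: the paper's own proof of Proposition \ref{proposiz} consists precisely of the observation that \eqref{corol1} and \eqref{corol2} follow by summing (respectively, taking suprema of) the local estimates \eqref{stima1}--\eqref{stima4} and \eqref{stima1comp}--\eqref{stima4comp} over all $Q_0\in W_2$. The bounded-overlap bookkeeping you single out as the main obstacle is exactly what the paper leaves implicit (resting on \eqref{numero}, \eqref{finito} and the uniform chain-length bound), and your treatment of it is sound.
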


\begin{proof} By summing up over every $Q_0\in W_2$, the thesis follows as a direct consequence of Lemma \ref{lemma1} and Lemma \ref{lemma2}. In particular, \eqref{corol1} follows from \eqref{stima1}, \eqref{stima2}, \eqref{stima1comp} and \eqref{stima2comp}, while \eqref{corol2} follows from \eqref{stima3}, \eqref{stima4}, \eqref{stima3comp} and \eqref{stima4comp}.
\end{proof}

We now prove the first main result of this paper, which follows from the above lemmas.

\begin{theorem}[Friedrichs inequality]\label{fridis} Let $\Omega\subset\R^n$ be a bounded $(\varepsilon,\delta)$ domain with $\partial\Omega$ a $d$-set. There exists a constant $C=C(\varepsilon,\delta,n,p,\Omega)>0$ such that, for every $v\in W^{1,p}(\Omega)^n$ such that $v\in W^p(\curl,\Omega)\cap W^p_0(\dive,\Omega)$,
\begin{equation}\label{friedrichs}
\|v\|_{W^{1,p}(\Omega)^n}\leq C\left(\|v\|_{L^p(\Omega)^n}+\|\curl v\|_{L^p(\Omega)^n}+\|\dive v\|_{L^p(\Omega)}\right).
\end{equation}
\end{theorem}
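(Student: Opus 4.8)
The plan is to transfer the classical Friedrichs inequality \eqref{Fr1}, valid on smooth domains, from a regular auxiliary domain to $\Omega$ by means of the extension operator $E$ constructed above, and then to remove the extra regularity by density. I would fix once and for all an open ball $B$ with $\overline{\Omega}\subset B$, which exists since $\Omega$ is bounded; as $B$ is convex, the inequality \eqref{Fr1} is available on $B$ for every field in $W^{1,p}(B)^n$. I would work first with $v\in W^{1,\infty}(\Omega)^n$ satisfying the running hypotheses under which $Ev$ and Proposition \ref{proposiz} have been established, deferring the general case to a final density step.

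The crucial point is that $Ev$ is not merely a function with controlled norms on each of $\Omega$ and $(\Omega^c)^0$ separately, but a genuine element of $W^{1,p}(B)^n$. Since $Ev=v$ on $\Omega$ with $v\in W^{1,\infty}(\Omega)^n\subset W^{1,p}(\Omega)^n$, while the norms of $Ev$, $\nabla(Ev)$, $\dive(Ev)$ and $\curl(Ev)$ over $(\Omega^c)^0$ are finite by \eqref{corol1} and \eqref{corol2}, the only thing left is to check that no singular derivative is produced across $\partial\Omega$. This is exactly where the $(\varepsilon,\delta)$ structure enters: $\partial\Omega$ has Lebesgue measure zero (Lemma 2.3 of \cite{Jones}) and the reflected-cube/chain construction underlying $E$ is designed so that the two definitions of $Ev$ match up across $\partial\Omega$ in the $W^{1,p}$ sense, whence the weak gradient on $B$ is obtained by gluing the two pieces. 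Verifying this gluing — i.e. that $E$ indeed maps into $W^{1,p}(B)^n$, which is the content of Jones' extension theorem adapted to the present vector-valued setting — is what I expect to be the main obstacle; everything else is bookkeeping.

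Granting $Ev\in W^{1,p}(B)^n$, I would apply \eqref{Fr1} on $B$ to obtain
\[
\|v\|_{W^{1,p}(\Omega)^n}\leq\|Ev\|_{W^{1,p}(B)^n}\leq C\left(\|Ev\|_{L^p(B)^n}+\|\dive(Ev)\|_{L^p(B)}+\|\curl(Ev)\|_{L^p(B)^n}\right),
\]
where the first inequality uses $\Omega\subset B$ together with $Ev=v$ on $\Omega$. I would then split each norm over $B$ into its contributions on $\Omega$ and on $(\Omega^c)^0\cap B$, the interface $\partial\Omega$ being negligible. On $\Omega$ the integrands reduce to $v$, $\dive v$ and $\curl v$, while on $(\Omega^c)^0\cap B$ the three terms are dominated by the left-hand side of \eqref{corol1}. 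Collecting the two contributions bounds the right-hand side above by $C\left(\|v\|_{L^p(\Omega)^n}+\|\dive v\|_{L^p(\Omega)}+\|\curl v\|_{L^p(\Omega)^n}\right)$, which is precisely \eqref{friedrichs} for $v\in W^{1,\infty}(\Omega)^n$.

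Finally, since $W^{1,\infty}(\Omega)^n$ is dense in $W^{1,p}(\Omega)^n$ and every term appearing in \eqref{friedrichs} is continuous for the $W^{1,p}$-norm, a routine limiting argument extends the estimate from the smooth subclass to all admissible $v\in W^{1,p}(\Omega)^n\cap W^p(\curl,\Omega)\cap W^p_0(\dive,\Omega)$, with a constant $C=C(\varepsilon,\delta,n,p,\Omega)$. The only care needed here is that the approximating sequence be chosen within the class on which the extension $E$ was built, so that the per-cube boundary conditions used to define the polynomials $P_S(v)$ remain in force along the approximation.
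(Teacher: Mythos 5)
Your proposal follows essentially the same route as the paper: extend $v$ to a ball $B$ via the operator $E$, invoke the classical Friedrichs inequality on $B$, control the exterior contribution by Proposition \ref{proposiz} (estimate \eqref{corol1}), and conclude by density. The only difference is one of emphasis — you explicitly flag the membership $Ev\in W^{1,p}(B)^n$ (the gluing across $\partial\Omega$) and the compatibility of the approximating sequence with the per-cube hypotheses as the points requiring care, which the paper asserts without further comment.
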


\begin{proof} It is sufficient to prove \eqref{friedrichs} for $v\in W^{1,\infty}(\Omega)^n$; the thesis will then follow by density. We recall that the extension $Ev$ is defined a.e. on $\R^n$ since $|\partial\Omega|=0$. Moreover, from the definition of $Ev$ we can suppose that $\supp Ev$ is contained in a ball $B$.\\
Since $Ev\in W^{1,p}(B)^n$, from \eqref{corol1} we have that
\begin{equation*}
\|Ev\|_{L^p(B)^n}+\|\curl(Ev)\|_{L^p(B)^n}+\|\dive(Ev)\|_{L^p(B)}\leq C\left(\|v\|_{L^p(\Omega)^n}+\|\curl v\|_{L^p(\Omega)^n}+\|\dive v\|_{L^p(\Omega)}\right).
\end{equation*}
Hence, from Friedrichs inequality for smooth domains and the above inequality, we get
\begin{align*}
\|v\|_{W^{1,p}(\Omega)^n}&=\|Ev\|_{W^{1,p}(\Omega)^n}\leq\|Ev\|_{W^{1,p}(B)^n}\leq C(\|Ev\|_{L^p(B)^n}+\|\curl(Ev)\|_{L^p(B)^n}+\|\dive(Ev)\|_{L^p(B)})\\[2mm]
&\leq C\left(\|v\|_{L^p(\Omega)^n}+\|\curl v\|_{L^p(\Omega)^n}+\|\dive v\|_{L^p(\Omega)}\right),
\end{align*}
i.e. the thesis.
\end{proof}

We conclude this section by proving Gaffney inequality as a direct consequence of Theorem \ref{fridis}.

\begin{theorem}[Gaffney inequality]\label{gaffin} Let $\Omega\subset\R^n$ be a bounded simply connected $(\varepsilon,\delta)$ domain with $\partial\Omega$ a $d$-set. Let $v\in W^{1,p}(\Omega)^n$ be such that $v\in W^p(\curl,\Omega)\cap W^p_0(\dive,\Omega)$. Then there exists $C=C(\varepsilon,\delta,n,p,\Omega)>0$ such that
\begin{equation}\label{gaffney}
\|v\|_{W^{1,p}(\Omega)^n}\leq C\left(\|\curl v\|_{L^p(\Omega)^n}+\|\dive v\|_{L^p(\Omega)}\right).
\end{equation}
\end{theorem}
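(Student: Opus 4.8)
The plan is to obtain \eqref{gaffney} from Friedrichs inequality \eqref{friedrichs} by a compactness argument, the point being to absorb the zeroth-order term $\|v\|_{L^p(\Omega)^n}$ on the right-hand side of \eqref{friedrichs} once $\Omega$ is simply connected. First I would argue by contradiction: if no constant as in \eqref{gaffney} existed, there would be a sequence $\{v_k\}\subset W^{1,p}(\Omega)^n$ with $v_k\in W^p(\curl,\Omega)\cap W^p_0(\dive,\Omega)$, normalized so that $\|v_k\|_{W^{1,p}(\Omega)^n}=1$, while $\|\curl v_k\|_{L^p(\Omega)^n}+\|\dive v_k\|_{L^p(\Omega)}\to 0$. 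Inserting this sequence into \eqref{friedrichs} forces $\liminf_k\|v_k\|_{L^p(\Omega)^n}\geq 1/C>0$; otherwise the right-hand side of \eqref{friedrichs} would tend to $0$ while the left-hand side stays equal to $1$.

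The second step is to pass to the limit. Since $\Omega$ is a bounded $(\varepsilon,\delta)$ domain, Jones' extension theorem \cite{Jones} makes it a $W^{1,p}$-extension domain, and hence the embedding $W^{1,p}(\Omega)^n\hookrightarrow L^p(\Omega)^n$ is compact (Rellich--Kondrachov applied after extension to a ball). Thus, up to a subsequence, $v_k\to v$ strongly in $L^p(\Omega)^n$ and weakly in $W^{1,p}(\Omega)^n$, and the lower bound above yields $\|v\|_{L^p(\Omega)^n}>0$. The weak limit satisfies $\curl v=0$ and $\dive v=0$. Moreover the normal boundary condition survives: because $\dive v_k\to 0$ in $L^p(\Omega)$ and $v_k\to v$ in $L^p(\Omega)^n$, one has convergence in $W^p(\dive,\Omega)$, so the continuity of the normal trace $l_\nu$ stated in Theorem \ref{green} gives $\nu\cdot v=0$, i.e. $v\in W^p_0(\dive,\Omega)$.

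The final step uses simple connectedness to conclude $v\equiv 0$, contradicting $\|v\|_{L^p(\Omega)^n}>0$. Since $\curl v=0$ on the simply connected domain $\Omega$, the field admits a scalar potential $\phi$ with $v=\nabla\phi$; then $\dive v=0$ gives $\Delta\phi=0$, while $\nu\cdot v=0$ yields the homogeneous Neumann condition $\partial_\nu\phi=0$ interpreted through \eqref{greenformula}. A harmonic function on a connected set with vanishing Neumann data is constant, so $v=\nabla\phi=0$, which is the desired contradiction. Equivalently, one is showing that the space of Neumann harmonic fields (curl-free, divergence-free, with vanishing normal trace) is trivial on a simply connected domain, which is precisely the role of the simple-connectedness hypothesis.

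The hard part will be the limit step on such an irregular domain. Two points require care: ensuring the compact embedding (which rests on the $(\varepsilon,\delta)$ extension property), and, more delicately, transferring the condition $\nu\cdot v_k=0$ to $v$ despite the possibly fractal nature of $\partial\Omega$. Here Theorem \ref{green} is essential, since it furnishes the normal trace as a continuous map into the dual Besov space $(B^{p',p'}_\alpha(\partial\Omega))'$, which is exactly what makes the passage to the limit legitimate. A secondary technical point is the construction of the potential $\phi$ and the Neumann argument in the $W^{1,p}$-setting for $p\neq 2$; for $p\geq 2$ one reduces to the $L^2$ integration-by-parts computation via $W^{1,p}(\Omega)^n\hookrightarrow L^2(\Omega)^n$, while the general case uses interior analyticity of the componentwise harmonic field $v$ together with the vanishing normal trace.
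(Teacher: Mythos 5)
Your proposal is correct and follows the same overall strategy as the paper: a contradiction argument with a normalized sequence, compactness of $W^{1,p}(\Omega)^n\hookrightarrow L^p(\Omega)^n$ via the Jones extension property, identification of the weak limit as a curl-free, divergence-free field with vanishing normal trace, and triviality of such Neumann harmonic fields on a simply connected domain. The one genuine difference is how the final contradiction is closed. The paper applies Friedrichs inequality \eqref{friedrichs} to the differences $v_k-v_j$ to show that $\{v_k\}$ is Cauchy, hence strongly convergent in $W^{1,p}(\Omega)^n$, and then contradicts $1=\|v_k\|_{W^{1,p}(\Omega)^n}\to\|v\|_{W^{1,p}(\Omega)^n}=0$. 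You instead apply \eqref{friedrichs} once to each $v_k$ to extract the lower bound $\liminf_k\|v_k\|_{L^p(\Omega)^n}\geq 1/C>0$, which survives under the strong $L^p$ convergence and is contradicted by $v\equiv 0$; this spares you the Cauchy step entirely and only requires the compact embedding rather than strong $W^{1,p}$ convergence. Your treatment of the passage to the limit of the boundary condition $\nu\cdot v_k=0$ is also more explicit than the paper's: you correctly observe that $v_k\to v$ in $W^p(\dive,\Omega)$ and invoke the continuity of $l_\nu$ from Theorem \ref{green}, a point the paper takes for granted. Both arguments are valid; yours is marginally more economical, while the paper's yields the stronger byproduct that the minimizing sequence converges strongly in $W^{1,p}(\Omega)^n$.
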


\begin{proof} We argue by contradiction. Let us suppose that \eqref{gaffney} does not hold; hence, there exists a sequence of vectors $\{v_k\}\subset W^{1,p}(\Omega)^n\cap W^p(\curl,\Omega)\cap W^p_0(\dive,\Omega)$ such that
\begin{equation*}
\|v_k\|_{W^{1,p}(\Omega)^n}=1\quad\text{and}\quad\|\curl v_k\|_{L^p(\Omega)^n}+\|\dive v_k\|_{L^p(\Omega)}\xrightarrow[k\to+\infty]{} 0.
\end{equation*}
Since $\|v_k\|_{W^{1,p}(\Omega)^n}=1$, there exists a subsequence of $\{v_k\}$ (which we still denote by $v_{k}$) such that
\begin{equation*}
v_{k}\rightharpoonup v\,\,\text{in}\,\,W^{1,p}(\Omega)^n\quad\text{and}\quad v_{k}\rightarrow v\,\,\text{in}\,\,L^p(\Omega)^n.
\end{equation*}
Since the distributional limits coincide with the weak limits, it immediately follows that $\dive v=0$ and $\curl v=0$.\\
We now prove that $\{v_k\}$ is a Cauchy sequence in $W^{1,p}(\Omega)^n$. From Friedrichs inequality \eqref{friedrichs}, for every $k,j\in\N$ one has
\begin{equation}\label{stimacauchy}
\|v_k-v_j\|_{W^{1,p}(\Omega)^n}\leq C\left(\|v_k-v_j\|_{L^p(\Omega)^n}+\|\dive (v_k-v_j)\|_{L^p(\Omega)}+\|\curl (v_k-v_j)\|_{L^p(\Omega)^n}\right).
\end{equation}
From the strong convergence of $v_k$ in $L^p(\Omega)^n$, the first term on the right-hand side of \eqref{stimacauchy} vanishes. As to the other two terms, they also vanish since $\curl v_k$ and $\dive v_k$ both tend to 0 in $L^p$ as $k\to+\infty$. Hence $v_k$ is a Cauchy sequence in $W^{1,p}(\Omega)^n$, and $v_k\to v$ strongly in $W^{1,p}(\Omega)^n$.\\
We recall that if $\curl v=0$ in $\Omega$ and $\Omega$ is simply connected, there exists a function $\Phi\in W^{1,p}(\Omega)$ such that $v=\nabla\Phi$. This in turn implies that $\Delta\Phi=\dive\nabla\Phi=\dive v=0$ in $\Omega$. Moreover, since $v\in W^p_0(\dive)$, we also have that
\begin{center}
$\displaystyle\frac{\partial\Phi}{\partial\nu}=\nu\cdot\nabla\Phi=\nu\cdot v=0$ on $\partial\Omega$.
\end{center}
Hence $\Phi\in W^{1,p}(\Omega)$ is the unique weak solution of the following problem
\begin{equation}\label{probphi}
\begin{cases}
\Delta\Phi=0\quad &\text{in}\,\,\Omega,\\[2mm]
\displaystyle\frac{\partial\Phi}{\partial\nu}=0 &\text{on}\,\,\partial\Omega.
\end{cases}
\end{equation}
This implies that $\Phi$ is constant, and so $v=\nabla\Phi=0$ on $\Omega$. We reached a contradiction, since
\begin{equation*}
1=\|v_k\|_{W^{1,p}(\Omega)^n}\xrightarrow[k\to+\infty]{}\|v\|_{W^{1,p}(\Omega)^n}=0.
\end{equation*}

\end{proof}

\subsection{The case $v\in W^p(\dive,\Omega)\cap W^p_0(\curl,\Omega)$}\label{rotore}

We now consider the case $v\in W^p(\dive,\Omega)\cap W^p_0(\curl,\Omega)$. We recall that this implies $\nu\times v=0$ on $\partial\Omega$ in the dual of $B^{p',p'}_\alpha(\partial\Omega)$.\\
We approximate $v\in W^p(\dive,\Omega)\cap W^p_0(\curl,\Omega)\cap W^{1,\infty}(\Omega)^n$ by means of the polynomials $P_j$ as in the previous section. We remark that in this case
\begin{equation*}
\dive P_j=\frac{1}{|Q_j^*|}\int_{Q_j^*} \dive v\,\de x\neq 0.
\end{equation*}

As in the previous subsection, estimates \eqref{int1} and \eqref{poincare} hold, as well as lemmas \ref{lemma3.2}, \ref{lemma1} and \ref{lemma2}, under the hypothesis that $v\in W^p(\dive,\Omega)\cap W^p_0(\curl,\Omega)\cap W^p_0(\curl,S)$ for every $S\in W_1$.\\
For the sake of clarity, we state the analogous of Proposition \ref{proposiz} and Theorem \ref{fridis} in this case.

\begin{prop} For every $v\in W^{1,\infty}(\Omega)^n$ such that $v\in W^p(\dive,\Omega)\cap W^p_0(\curl,\Omega)$ we have
\begin{align}
\|Ev\|_{L^p((\Omega^c)^0)^n}&+\|\dive(Ev)\|_{L^p((\Omega^c)^0)}+\|\curl(Ev)\|_{L^p((\Omega^c)^0)^n}\notag\\[2mm]
&\leq C\left(\|v\|_{L^p(\Omega)^n}+\|\dive v\|_{L^p(\Omega)}+\|\curl v\|_{L^p(\Omega)^n}\right)\label{corol1rot}
\end{align}
and
\begin{equation}\label{corol2rot}
\|Ev\|_{W^{1,\infty}((\Omega^c)^0)^n}\leq C\|v\|_{W^{1,\infty}(\Omega)^n}.
\end{equation}
\end{prop}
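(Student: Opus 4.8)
The plan is to mirror the proof of Proposition \ref{proposiz}: the two estimates \eqref{corol1rot} and \eqref{corol2rot} are obtained by summing, over all cubes $Q_0$ of the Whitney decomposition $W_2$ of $(\Omega^c)^0$, the local bounds furnished by the analogues of Lemmas \ref{lemma1} and \ref{lemma2}, whose validity in the present setting has already been recorded. First I would split $W_2=W_3\cup(W_2\setminus W_3)$. On each $Q_0\in W_3$ I apply the analogue of Lemma \ref{lemma1}, controlling $\|Ev\|_{L^p(Q_0)^n}$, $\|\dive(Ev)\|_{L^p(Q_0)}+\|\curl(Ev)\|_{L^p(Q_0)^n}$ and the corresponding $L^\infty$ quantities by local norms of $v$, $\dive v$ and $\curl v$ over the reflected cube $Q_0^*$ and the associated set $F(Q_0)$. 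On each $Q_0\in W_2\setminus W_3$ I use the analogue of Lemma \ref{lemma2}, which yields the same type of bound summed over the finitely many $Q_j\in W_3$ meeting $Q_0$.

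Next I would sum these local inequalities. The only point to verify is that the localized right-hand sides reassemble into the global norms $\|v\|_{L^p(\Omega)^n}$, $\|\dive v\|_{L^p(\Omega)}$ and $\|\curl v\|_{L^p(\Omega)^n}$ (respectively their $L^\infty$ analogues for \eqref{corol2rot}) with a constant independent of $Q_0$. This is precisely where the finite-overlap property \eqref{finito} enters: each reflected cube $Q_j^*\in W_1$ is hit only a bounded number of times, and the chains making up the sets $F(Q_0)$ overlap boundedly, so that $\sum_{Q_0}\|\dive v\|_{L^p(F(Q_0))}^p\le C\|\dive v\|_{L^p(\Omega)}^p$ and likewise for the other terms. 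Summing the $L^p$ estimates then gives \eqref{corol1rot}, and summing the $L^\infty$ estimates gives \eqref{corol2rot}.

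The one genuine difference from Section \ref{divergenza}, and the step I would handle most carefully, is the divergence bookkeeping, since here $\dive P_j\neq 0$. Writing $Ev=P_0+\sum_{Q_j\in W_3}(P_j-P_0)\phi_j$ on $Q_0$ now produces $\dive(Ev)=\dive P_0+\sum_{Q_j}\dive((P_j-P_0)\phi_j)$, so the constant term $\dive P_0$ no longer drops out as it did via \eqref{mediadive} in the first subsection. I would absorb it by observing that $\dive P_0=|Q_0^*|^{-1}\int_{Q_0^*}\dive v\,\de x$ is constant, whence $\|\dive P_0\|_{L^p(Q_0)}\le C\|\dive v\|_{L^p(Q_0^*)}$ by H\"older's inequality together with the comparability $\ell(Q_0)\approx\ell(Q_0^*)$ from \eqref{numero}; this extra contribution is already of the form appearing on the right-hand sides of \eqref{stima2} and \eqref{stima2comp}. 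Since $\curl P_0=0$ is retained, the curl part of each local estimate is unchanged. Thus the local lemmas close exactly as in the divergence case, the summation carries through, and the proposition follows.
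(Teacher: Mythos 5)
Your proposal is correct and follows essentially the same route as the paper, which proves this proposition by restating Lemmas \ref{lemma1} and \ref{lemma2} in the curl-trace setting and summing over the Whitney cubes of $(\Omega^c)^0$ exactly as in Proposition \ref{proposiz}. Your explicit treatment of the non-vanishing term $\dive P_0$ (bounded via H\"older and $\ell(Q_0)\approx\ell(Q_0^*)$) is precisely the ``obvious change'' the paper alludes to, which it instead encodes by replacing $\|\dive v\|$ with $\|\dive v-\tilde{v}_S\|$ in the local lemmas; both versions are dominated by $\|\dive v\|_{L^p(\Omega)}$ after summation, so the two bookkeeping choices are equivalent.
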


\bigskip

\begin{theorem}[Friedrichs inequality]\label{fridisrot} Let $\Omega\subset\R^n$ be a bounded $(\varepsilon,\delta)$ domain with $\partial\Omega$ a $d$-set. There exists a constant $C=C(\varepsilon,\delta,n,p,\Omega)>0$ such that, for every $v\in W^{1,p}(\Omega)^n$ such that $v\in W^p(\dive,\Omega)\cap W^p_0(\curl,\Omega)$,
\begin{equation}\label{friedrichsrot}
\|v\|_{W^{1,p}(\Omega)^n}\leq C\left(\|v\|_{L^p(\Omega)^n}+\|\curl v\|_{L^p(\Omega)^n}+\|\dive v\|_{L^p(\Omega)}\right).
\end{equation}
\end{theorem}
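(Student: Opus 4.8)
The plan is to repeat, essentially verbatim, the argument used for Theorem \ref{fridis}, with the Proposition stated immediately above (which provides \eqref{corol1rot} and \eqref{corol2rot}) playing the role that Proposition \ref{proposiz} played there. First I would reduce to the case $v\in W^{1,\infty}(\Omega)^n$. Indeed, it suffices to establish \eqref{friedrichsrot} on this dense subclass: every $v\in W^{1,p}(\Omega)^n$ with $v\in W^p(\dive,\Omega)\cap W^p_0(\curl,\Omega)$ can be approximated in the $W^{1,p}$-norm by Lipschitz fields, and since each of the three seminorms on the right-hand side of \eqref{friedrichsrot} is continuous for the $W^{1,p}(\Omega)^n$-topology, the inequality survives passage to the limit.

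For $v\in W^{1,\infty}(\Omega)^n$ I would invoke the extension $Ev$ built in Section \ref{rotore} from the reflected polynomials $P_j=P_{Q_j^*}(v)$. Because $|\partial\Omega|=0$ (Lemma 2.3 in \cite{Jones}), $Ev$ is defined almost everywhere on $\R^n$, and by construction $\supp Ev$ sits inside a fixed ball $B$ containing $\Omega$. Adding the contributions on $(\Omega^c)^0$ supplied by \eqref{corol1rot} to the trivial bounds on $\Omega$ itself, where $Ev=v$, I obtain
\[
\|Ev\|_{L^p(B)^n}+\|\curl(Ev)\|_{L^p(B)^n}+\|\dive(Ev)\|_{L^p(B)}\leq C\left(\|v\|_{L^p(\Omega)^n}+\|\curl v\|_{L^p(\Omega)^n}+\|\dive v\|_{L^p(\Omega)}\right).
\]

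Finally, since $B$ is smooth, I would apply the classical Friedrichs inequality \eqref{Fr1} to $Ev\in W^{1,p}(B)^n$ and then restrict to $\Omega$, writing
\[
\|v\|_{W^{1,p}(\Omega)^n}=\|Ev\|_{W^{1,p}(\Omega)^n}\leq\|Ev\|_{W^{1,p}(B)^n}\leq C\left(\|Ev\|_{L^p(B)^n}+\|\curl(Ev)\|_{L^p(B)^n}+\|\dive(Ev)\|_{L^p(B)}\right),
\]
and chaining this with the previous display yields \eqref{friedrichsrot}. I do not anticipate a genuine obstacle, since the argument is formally identical to that of Theorem \ref{fridis}: the role of \eqref{corol1} is now played by \eqref{corol1rot}, and all the case-specific features (in particular the fact that here $\dive P_j\neq 0$, whereas in the divergence case the mean-zero property \eqref{mediadive} forced a cancellation) have already been absorbed into the estimates of the preceding Proposition. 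The one point meriting care is that the Lipschitz fields used in the density step can be chosen to respect the tangential boundary condition $\nu\times v=0$; this is legitimate because that condition is interpreted weakly through the Stokes formula \eqref{stokesformula}.
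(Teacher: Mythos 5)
Your proposal is correct and coincides with what the paper intends: Theorem \ref{fridisrot} is given no separate proof there, the authors stating only that it follows as in Section \ref{divergenza} with the obvious changes, and your argument is precisely the proof of Theorem \ref{fridis} transposed verbatim with \eqref{corol1rot} in place of \eqref{corol1}. The reduction to $v\in W^{1,\infty}(\Omega)^n$, the use of the compactly supported extension $Ev$ on a ball $B$, and the application of the classical Friedrichs inequality on $B$ all match the paper's route.
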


We conclude by proving Gaffney inequality.

\begin{theorem}[Gaffney inequality]\label{gaffinrot} Let $\Omega\subset\R^n$ be a bounded simply connected $(\varepsilon,\delta)$ domain with $\partial\Omega$ a $d$-set. Let $v\in W^{1,p}(\Omega)^n$ be such that $v\in W^p(\dive,\Omega)\cap W^p_0(\curl,\Omega)$. Then there exists $C=C(\varepsilon,\delta,n,p,\Omega)>0$ such that
\begin{equation}\label{gaffneyrot}
\|v\|_{W^{1,p}(\Omega)^n}\leq C\left(\|\curl v\|_{L^p(\Omega)^n}+\|\dive v\|_{L^p(\Omega)}\right).
\end{equation}
\end{theorem}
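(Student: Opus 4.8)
The plan is to mirror the contradiction argument used for Theorem~\ref{gaffin}, replacing the Neumann boundary value problem by the associated Dirichlet one. First I would assume that \eqref{gaffneyrot} fails, so that there is a sequence $\{v_k\}\subset W^{1,p}(\Omega)^n\cap W^p(\dive,\Omega)\cap W^p_0(\curl,\Omega)$ with $\|v_k\|_{W^{1,p}(\Omega)^n}=1$ and $\|\curl v_k\|_{L^p(\Omega)^n}+\|\dive v_k\|_{L^p(\Omega)}\to 0$. Since $\Omega$ is a bounded $(\varepsilon,\delta)$ domain it is a $W^{1,p}$-extension domain, so the embedding $W^{1,p}(\Omega)^n\hookrightarrow L^p(\Omega)^n$ is compact; passing to a subsequence I then get $v_k\rightharpoonup v$ in $W^{1,p}(\Omega)^n$ and $v_k\to v$ in $L^p(\Omega)^n$. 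Taking distributional limits in $\dive v_k$ and $\curl v_k$ yields $\dive v=0$ and $\curl v=0$.

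Next I would upgrade this to strong convergence in $W^{1,p}(\Omega)^n$. Applying the Friedrichs inequality \eqref{friedrichsrot} to the differences $v_k-v_j$ gives
\begin{equation*}
\|v_k-v_j\|_{W^{1,p}(\Omega)^n}\leq C\left(\|v_k-v_j\|_{L^p(\Omega)^n}+\|\dive(v_k-v_j)\|_{L^p(\Omega)}+\|\curl(v_k-v_j)\|_{L^p(\Omega)^n}\right).
\end{equation*}
The first term on the right vanishes by the strong $L^p$ convergence, while the last two vanish because $\curl v_k$ and $\dive v_k$ tend to $0$ in $L^p$; hence $\{v_k\}$ is Cauchy and $v_k\to v$ strongly in $W^{1,p}(\Omega)^n$, so that $\|v\|_{W^{1,p}(\Omega)^n}=1$.

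The final and most delicate step is to show that the limit $v$ must vanish, which contradicts $\|v\|_{W^{1,p}(\Omega)^n}=1$. Since $\curl v=0$ and $\Omega$ is simply connected, there is a scalar potential $\Phi\in W^{1,p}(\Omega)$ with $v=\nabla\Phi$, and then $\Delta\Phi=\dive\nabla\Phi=\dive v=0$ in $\Omega$. The crucial difference with Theorem~\ref{gaffin} lies in the boundary condition: here $v\in W^p_0(\curl,\Omega)$ encodes $\nu\times v=0$ on $\partial\Omega$ in the dual of $B^{p',p'}_\alpha(\partial\Omega)$, which for $v=\nabla\Phi$ becomes $\nu\times\nabla\Phi=0$, i.e. the tangential part of $\nabla\Phi$ vanishes on $\partial\Omega$. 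This forces $\Phi$ to be constant on $\partial\Omega$, so that $\Phi$ is the unique weak solution of the Dirichlet problem
\begin{equation*}
\begin{cases}
\Delta\Phi=0 &\text{in}\,\,\Omega,\\[2mm]
\Phi=c &\text{on}\,\,\partial\Omega,
\end{cases}
\end{equation*}
for some constant $c\in\R$; by uniqueness $\Phi\equiv c$, hence $v=\nabla\Phi=0$, and the contradiction follows.

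The step I expect to be the main obstacle is making rigorous, on the irregular boundary $\partial\Omega$, the passage from the weak tangential-trace identity $\nu\times\nabla\Phi=0$ to the statement that $\Phi$ is constant on $\partial\Omega$. This has to be read through the generalized Stokes formula \eqref{stokesformula}: testing $\langle\nabla\Phi\times\nu,w\rangle=0$ against $w\in W^{1,p'}(\Omega)^n$ and using $\curl\nabla\Phi=0$ identifies the vanishing of the surface (tangential) gradient of $\Phi$, from which constancy on $\partial\Omega$ follows, simple connectedness being what guarantees that a single constant $c$ occurs and hence that the harmonic field $\nabla\Phi$ is trivial. One then only needs well-posedness of the Dirichlet problem for the Laplacian on the $(\varepsilon,\delta)$ domain, which holds since such domains are Sobolev extension domains.
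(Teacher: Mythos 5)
Your proposal follows essentially the same route as the paper's proof: contradiction via a normalized sequence, strong $W^{1,p}$ convergence through the Friedrichs inequality \eqref{friedrichsrot}, a scalar potential $\Phi$ from simple connectedness, and the reduction to the homogeneous Dirichlet problem \eqref{probphirot} forcing $v=0$. Your added remarks on the compact embedding and on rigorously passing from $\nu\times\nabla\Phi=0$ to constancy of $\Phi$ on $\partial\Omega$ are sensible elaborations of steps the paper treats briefly, but they do not change the argument.
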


\begin{proof} We proceed as in the proof of \cite[Corollary 3.51]{monk}; we argue by contradiction and we suppose that \eqref{gaffneyrot} does not hold. As in Theorem \ref{gaffin}, this means that there exists a sequence of vectors $\{v_k\}\subset W^{1,p}(\Omega)^n\cap W^p(\dive,\Omega)\cap W^p_0(\curl,\Omega)$ such that
\begin{equation*}
\|v_k\|_{W^{1,p}(\Omega)^n}=1\quad\text{and}\quad\|\curl v_k\|_{L^p(\Omega)^n}+\|\dive v_k\|_{L^p(\Omega)}\xrightarrow[k\to+\infty]{} 0.
\end{equation*}
This implies that
\begin{equation*}
v_{k}\rightharpoonup v\,\,\text{in}\,\,W^{1,p}(\Omega)^n\quad\text{and}\quad v_{k}\rightarrow v\,\,\text{in}\,\,L^p(\Omega)^n,
\end{equation*}
with $\dive v=0$ and $\curl v=0$.\\
From \eqref{friedrichsrot}, for every $k,j\in\N$ we have that
\begin{equation}\label{stimacauchyrot}
\|v_k-v_j\|_{W^{1,p}(\Omega)^n}\leq C\left(\|v_k-v_j\|_{L^p(\Omega)^n}+\|\dive (v_k-v_j)\|_{L^p(\Omega)}+\|\curl (v_k-v_j)\|_{L^p(\Omega)^n}\right).
\end{equation}
As in the proof of Theorem \ref{gaffin}, all the terms on the right-hand side of \eqref{stimacauchyrot} vanish when $k,j\to+\infty$, hence $\{v_k\}$ is a Cauchy sequence in $W^{1,p}(\Omega)^n$.\\ 
As in the case $v\in W^p(\curl,\Omega)\cap W^p_0(\dive,\Omega)$, there exists a function $\Phi\in W^{1,p}(\Omega)$ such that $v=\nabla\Phi$ and $\Delta\Phi=0$ in $\Omega$. Since in this case $v\in W^p_0(\curl,\Omega)$, we also have that
\begin{center}
$\nu\times\nabla\Phi=\nu\times v=0$ on $\partial\Omega$.
\end{center}
Up to shifting $\Phi$ by a constant, this implies that $\Phi=0$ on $\partial\Omega$ in the trace sense.
Hence $\Phi\in W^{1,p}(\Omega)$ is the unique weak solution of the following problem
\begin{equation}\label{probphirot}
\begin{cases}
\Delta\Phi=0\quad &\text{in}\,\,\Omega,\\[2mm]
\Phi=0 &\text{on}\,\,\partial\Omega.
\end{cases}
\end{equation}
This implies that $\Phi=0$, therefore $v=0$ on $\Omega$ and we reach the contradiction.
\end{proof}

\medskip

\noindent {\bf Acknowledgements.} The authors have been supported by the Gruppo Nazionale per l'Analisi Matematica, la Probabilit\`a e le loro Applicazioni (GNAMPA) of the Istituto Nazionale di Alta Matematica (INdAM).


\begin{thebibliography}{38}

\bibitem{amrouche} C. Amrouche, C. Bernardi, M. Dauge, V. Girault, \emph{Vector potentials in three-dimensional non-smooth domains}, Math. Methods Appl. Sci., 21 (1998), 823--864.

\bibitem{bauerpauly} S. Bauer, D. Pauly, \emph{On Korn's first inequality for tangential or normal boundary conditions with explicit constants}, Math. Methods Appl. Sci., 39 (2016), 5695--5704.

\bibitem{CHLTV} S. Creo, M. Hinz, M. R. Lancia, A. Teplyaev, P. Vernole, \emph{Magnetostatic problems in fractal domains}, accepted for publication on Fractals and Dynamics in Mathematics, Science and the Arts published by World Scientific. Available on arXiv: https://arxiv.org/abs/1805.08262

\bibitem{dacorogna} G. Csato, B. Dacorogna, S. Sil, \emph{On the best constant in Gaffney inequality}, J. Funct. Anal., 274 (2018), 461--503.

\bibitem{duran} R. Dur\'an, M. A. Muschietti, \emph{The Korn inequality for Jones domains}, Electron. J. Differential Equations, 127 (2004), 10 pp.

\bibitem{friedrichs} K. O. Friedrichs, \emph{Differential forms on Riemannian manifolds}, Comm. Pure Appl. Math., 8 (1955), 551--590.

\bibitem{gaffney} M. P. Gaffney, \emph{Hilbert space methods in the theory of harmonic integrals}, Trans. Amer. Math. Soc., 78 (1955), 426--444.


\bibitem{Jones} P. W. Jones, {\em Quasiconformal mapping and extendability of functions in Sobolev spaces}, Acta Math., 147 (1981), 71--88.

\bibitem{jonsson91} A. Jonsson, {\em Besov spaces on closed subsets of $\R^n$}, Trans. Amer. Math. Soc., 341 (1994), 355--370.

\bibitem{JoWa} A. Jonsson, H. Wallin, {\em Function Spaces on Subsets of $\mathbb{R}^n$}, Part 1, Math. Reports, vol.2, Harwood Acad. Publ., London, 1984.

\bibitem{JoWa2} A. Jonsson, H. Wallin, {\em The dual of Besov spaces on fractals}, Studia Math., 112 (1995), 285--300.

\bibitem{LaVe2} M. R. Lancia, P. Vernole, {\em Semilinear fractal problems: approximation and regularity results}, Nonlinear Anal., 80 (2013), 216--232.

\bibitem{LVstokes} M. R. Lancia, P. Vernole, {\em The Stokes problems in fractal domains: asymptotic behavior of the solutions}, accepted for publication on Discrete Contin. Dyn. Syst. Ser. S.

\bibitem{monk} P. Monk, \emph{Finite Element Methods for Maxwell's Equations}, Oxford University Press, New York, 2003.

\bibitem{NPW15} P. Neff, D. Pauly, K. J. Witsch, \emph{Poincar\'e meets Korn via Maxwell: Extending Korn's first inequality to incompatible tensor fields}, J. Diff. Eq., 258 (2015), 1267--1302.

\bibitem{Schw16}
B. Schweizer, \emph{On Friedrichs inequality, Helmholtz decomposition,
vector potentials, and the div-curl lemma}, preprint (2016), TU Dortmund.

\bibitem{stein} E. M. Stein, \emph{Singular Integrals and Differentiability Properties of Functions}, Princeton University Press, Princeton, New Jersey, 1970.

\bibitem{temam} R. Temam, {\em Navier-Stokes Equations. Theory and Numerical Analysis}, Studies in Mathematics and its Applications, 2, North-Holland Publishing Co., Amsterdam-New York, 1979.

\bibitem{whitney} H. Whitney, \emph{Analytic extensions of differentiable functions defined in closed sets}, Trans. Amer. Math. Soc., 36 (1934), 63--89.





\end{thebibliography}
\end{document}